\DeclareSymbolFont{cyrletters}{OT2}{wncyr}{m}{n}
\DeclareMathSymbol{\sha}{\mathalpha}{cyrletters}{"58}
 \newtheorem{thm}{Théorème}[section]
 \newtheorem*{thmp}{Théorème principal}
 \newtheorem{conjecture}[thm]{Conjecture}
 \newtheorem{cor}[thm]{Corollaire}
 \newtheorem{lem}[thm]{Lemme}
 \newtheorem{prop}[thm]{Proposition}
 \theoremstyle{definition}
 \newtheorem{defn}[thm]{Définition}
 \theoremstyle{remark}
 \theoremstyle{remark}
 \newtheorem{rem}[thm]{\textbf{Remarque}}
 \numberwithin{equation}{subsection}
 \newcommand{\To}{\longrightarrow}
 \newcommand{\Q}{\mathbb{Q}}
 \newcommand{\Z}{\mathbb{Z}}
\begin{document}

\title[Principe local-global pour les zéro-cycles - I]
 {Principe local-global pour les zéro-cycles sur certaines fibrations au-dessus d'une courbe: I}

\author{ Yongqi LIANG  }

\address{Yongqi LIANG \newline
Département de Mathématiques, \newline Bâtiment 425,\newline Université  Paris-sud 11,\newline  F-91405 Orsay,\newline
 France}

\email{yongqi.liang@math.u-psud.fr}

\thanks{\textit{Mots clés} : zéro-cycle de degré $1$, principe de Hasse, approximation faible,
obstruction de Brauer-Manin}

\thanks{\textit{Classification AMS} : 14G25 (11G35, 14D10)}

\date{\today.}



\begin{abstract}
Soit $X$ une variété projective lisse sur un corps de nombres,
fibrée au-dessus d'une courbe $C,$ à fibres géométriquement
intègres. On démontre que, en supposant la finitude de
$\sha(Jac(C)),$ si les fibres au-dessus d'un sous-ensemble
hilbertien généralisé satisfont le principe de Hasse (resp.
l'approximation faible), alors l'obstruction de Brauer-Manin
provenant de la courbe en bas est la seule au principe de Hasse
(resp. à l'approximation faible) pour les zéro-cycles de degré
$1$ sur $X.$ Ceci est appliqué à l'exemple récent
de Poonen.
\keywords{Zéro-cycle de degré $1$ \and Principe de Hasse
\and Approximation faible \and Obstruction de Brauer-Manin}
\end{abstract}

\maketitle
\tableofcontents

\section*{Introduction}
\label{intro}

Soit $X$ une variété (\textit{i.e.} un schéma séparé de type fini)
projective lisse géométriquement intègre sur un corps parfait
$k.$ On note $Br(X)=H^2_{\mbox{\scriptsize\'et}}(X,\mathbb{G}_m)$ le groupe de Brauer cohomologique de $X.$
Pour tout point fermé $P$ de corps résiduel $k(P),$ on note $b(P)$ l'image d'un élément
$b\in Br(X)$ dans $Br(k(P)).$ On note $Z_0(X)$ le groupe des zéro-cycles de $X$ et $CH_0(X)$ le groupe de Chow des zéro-cycles
de $X.$
On peut ainsi définir un accouplement
\begin{equation*}
    \begin{array}{rcccl}
     \langle\cdot,\cdot\rangle_k:   Z_0(X) & \times & Br(X) & \to & Br(k),\\
          (\mbox{ }\sum_Pn_PP &,& b\mbox{ }) & \mapsto & \sum_Pcores_{k(P)/k}(b(P)),\\
    \end{array}
\end{equation*}
qui se factorise à travers $CH_0(X).$

Lorsque $k$ est un corps de nombres, on définit l'\emph{accouplement de Brauer-Manin} pour les
zéro-cycles:
\begin{equation*}
    \begin{array}{rcccl}
     \langle\cdot,\cdot \rangle _k:\prod_{v\in\Omega_k}Z_0(X_v) & \times & Br(X) & \to & \mathbb{Q}/\mathbb{Z},\\
         (\mbox{ }\{z_v\}_{v\in\Omega_k} & , & b\mbox{ }) & \mapsto & \sum_{v\in\Omega_k}inv_v(\langle z_v,b \rangle _{k_v}),\\
    \end{array}
\end{equation*}
où $\Omega_k$ est l'ensemble des places de $k,$
$inv_v:Br(k_v)\hookrightarrow\mathbb{Q}/\mathbb{Z}$
est l'invariant local en $v,$ et $X_v=X\times_kk_v.$
Grâce à la théorie du corps de classes global, l'image de $Z_0(X)$ dans $\prod_{v\in\Omega_k}Z_0(X_v)$
est orthogonale au groupe $Br(X).$ La notion d'obstruction de Brauer-Manin au principe de Hasse pour les zéro-cycles
sur $X$ est alors définie.
Dans son article \cite{CT99HP0-cyc}, les conjectures suivantes sont énoncées par Colliot-Thélène
pour une variété $X$ supposée projective lisse et géométriquement intègre de dimension $d$ sur un corps de nombres $k.$

\begin{conjecture}\label{conj1}
L'obstruction de Brauer-Manin au principe de Hasse pour les zéro-cycles de degré $1$ sur $X$ est la seule.
Autrement dit, s'il existe une famille de zéro-cycles $\{z_v\}\in\prod_{v\in\Omega_k}Z_0(X_v)$ de degré $1$ orthogonale au groupe
de Brauer $Br(X),$ alors il existe un zéro-cycle global de degré $1$ sur $X.$
\end{conjecture}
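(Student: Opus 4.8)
Je me placerai dans le cadre de la fibration $\pi:X\to C$ de l'introduction, \`a fibres g\'eom\'etriquement int\`egres, et je supposerai $\sha(Jac(C))$ finie. Le plan est de ramener l'\'enonc\'e sur $X$ \`a l'\'enonc\'e analogue sur la courbe de base $C,$ combin\'e avec le principe de Hasse (ou l'approximation faible) sur les fibres. Partant d'une famille $\{z_v\}_{v\in\Omega_k}$ de z\'ero-cycles locaux de degr\'e $1$ orthogonale \`a $Br(X),$ la premi\`ere \'etape consiste \`a la pousser vers le bas: la famille $\{\pi_*z_v\}_{v\in\Omega_k}$ est form\'ee de z\'ero-cycles locaux de degr\'e $1$ sur $C$ (l'image directe pr\'eserve le degr\'e), et la formule de projection $\langle\pi_*z_v,b\rangle_{k_v}=\langle z_v,\pi^*b\rangle_{k_v}$ montre qu'elle est orthogonale \`a $Br(C).$

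La deuxi\`eme \'etape est le cas des courbes. Sous la finitude de $\sha(Jac(C)),$ le th\'eor\`eme de Saito assure que l'obstruction de Brauer-Manin au principe de Hasse pour les z\'ero-cycles de degr\'e $1$ sur $C$ est la seule; on obtient donc un z\'ero-cycle global $z_C$ de degr\'e $1$ sur $C.$ Il reste \`a relever $z_C$ en un z\'ero-cycle global de degr\'e $1$ sur $X$ compatible avec la famille locale initiale.

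La troisi\`eme \'etape, qui est le c\oe ur de la preuve, combine un lemme de d\'eplacement et un choix hilbertien. On remplace $z_C$ par un z\'ero-cycle rationnellement \'equivalent dont le support est form\'e de points ferm\'es de $C$ appartenant au sous-ensemble hilbertien g\'en\'eralis\'e, de sorte que les fibres correspondantes soient lisses et v\'erifient le principe de Hasse (resp. l'approximation faible). Fibre par fibre, apr\`es \'eventuelle extension r\'esiduelle, on construit alors sur ces fibres des z\'ero-cycles locaux dont l'assemblage fournit, gr\^ace au principe de Hasse sur les fibres, un z\'ero-cycle global de degr\'e $1$ sur $X.$

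L'obstacle principal sera de contr\^oler l'obstruction de Brauer-Manin \emph{\`a travers} la fibration. Il faut garantir que le rel\`evement des cycles locaux au-dessus des points ferm\'es bien choisis pr\'eserve l'orthogonalit\'e \`a $Br(X)$ tout entier, et non seulement \`a $\pi^*Br(C),$ en g\'erant les classes dites verticales ainsi que les classes agissant non trivialement sur les fibres. C'est ici qu'intervient de mani\`ere essentielle une version pour les z\'ero-cycles de la m\'ethode de fibration de Salberger, et un contr\^ole fin de l'ensemble fini des places o\`u les approximations locales doivent \^etre r\'ealis\'ees simultan\'ement; le choix hilbertien devra \^etre rendu compatible avec ces conditions d'approximation, ce qui constitue le point technique d\'elicat.
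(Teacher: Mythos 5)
Votre plan suit pour l'essentiel la strat\'egie du texte pour le cas des fibrations (th\'eor\`eme \ref{thm4}) : pousser la famille locale sur $C$ par la formule de projection, invoquer le cas des courbes sous la finitude de $\sha(Jac(C)),$ puis choisir par irr\'eductibilit\'e de Hilbert un point ferm\'e de $C$ dont la fibre, satisfaisant le principe de Hasse, fournit le z\'ero-cycle global. Mais il y a une lacune r\'eelle \`a votre deuxi\`eme \'etape : le th\'eor\`eme de Saito ne fournit que l'\emph{existence} d'un z\'ero-cycle global $z_C$ de degr\'e $1$ sur $C,$ sans aucun contr\^ole de sa position locale, et c'est insuffisant. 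Le point ferm\'e $\theta\in\textsf{Hil}$ que l'on rel\`eve doit \^etre \emph{d\'eploy\'e partout localement} (la fibre $X_\theta$ doit avoir des points dans tous les compl\'et\'es de $k(\theta)$), sans quoi l'hypoth\`ese de principe de Hasse sur $X_\theta$ ne donne rien. Hors d'un ensemble fini $T$ de places, cela r\'esulte de Lang-Weil et de Hensel (lemme \ref{Lang-Weil}) ; mais aux places de $T,$ la seule source de points locaux est la famille $\{z_v\}$ de d\'epart, et il faut donc que $\theta$ soit $v$-adiquement proche, pour $v\in T,$ des pouss\'es en avant des cycles locaux (rendus effectifs et s\'eparables par d\'eplacement). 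Le m\'ecanisme qui produit un tel $\theta$ (lemme \ref{simplified}, via Riemann-Roch et la version effective d'Ekedahl) exige que ces cycles effectifs locaux soient \emph{rationnellement \'equivalents, dans chaque $Z_0(C_v),$ \`a un m\^eme z\'ero-cycle global} $y$ : c'est exactement ce que donne le th\'eor\`eme \ref{CTresult} de Colliot-Th\'el\`ene (approximation forte au niveau du groupe de Chow pour les courbes, sous $\sha(Jac(C))$ fini), utilis\'e via le lemme \ref{lem3} puis la proposition \ref{prop1}, et ce que la seule existence de $z_C$ ne donne pas. Avec Saito seul, rien ne garantit que les fibres au-dessus du support de votre cycle d\'eplac\'e aient des points locaux aux places de $T,$ et la construction \'echoue. (Notez aussi qu'un cycle de degr\'e $1$ n'est jamais effectif sauf s'il s'agit d'un point rationnel ; le texte contourne cela en ajoutant un multiple $(s+1)z_0$ d'un cycle effectif auxiliaire avant le choix hilbertien, puis en le retranchant \`a la fin, d\'etail que votre esquisse passe sous silence.)

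Votre derni\`ere inqui\'etude est en revanche mal plac\'ee : il n'y a pas \`a g\'erer de classes verticales ni \`a invoquer une m\'ethode de type Salberger. Le th\'eor\`eme du texte suppose seulement l'orthogonalit\'e au sous-groupe $\pi^*Br(C)$ (hypoth\`ese plus faible que l'orthogonalit\'e \`a $Br(X),$ donc conclusion plus forte), et cette condition de Brauer-Manin est int\'egralement consomm\'ee sur la courbe de base via le th\'eor\`eme \ref{CTresult}. Comme les fibres sont g\'eom\'etriquement int\`egres et que \emph{toutes} les fibres au-dessus de $\textsf{Hil}$ sont suppos\'ees v\'erifier le principe de Hasse sans condition de Brauer-Manin, aucun contr\^ole de $Br(X)$ \`a travers la fibration n'est n\'ecessaire : une fois $\theta$ d\'eploy\'e partout localement et dans $\textsf{Hil},$ l'hypoth\`ese arithm\'etique sur $X_\theta$ conclut directement.
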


\begin{conjecture}\label{conj2}
Soit $\delta$ un entier.
Soit $\{z_v\}\in\prod_{v\in\Omega_k}Z_0(X_v)$ une famille de zéro-cycles de degré $\delta$ orthogonale au groupe
de Brauer $Br(X).$ Alors, pour tout entier strictement positif $m,$ il existe un zéro-cycle $z_m\in Z_0(X)$ de degré $\delta$
tel que, en chaque place $v,$ les zéro-cycles $z_m$ et $z_v$ ont la même image
par l'application cycle $CH_0(X_v)\to\tilde{H}^{2d}(X_v,\Z/m\Z(d)).$
\end{conjecture}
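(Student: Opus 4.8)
The plan is to transfer the problem on $X$ down to the base curve $C$, where the analogue of Conjecture \ref{conj2} is known under the finiteness of $\sha(Jac(C))$ (work of Saito and Colliot-Th\'el\`ene on zero-cycles on curves). Since $f:X\to C$ has geometrically integral fibres, a zero-cycle on $C$ supported on closed points over which the fibre behaves well can be lifted, fibre by fibre, to a zero-cycle on $X$ of the same degree; the whole argument is an instance of the fibration method for zero-cycles.

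Concretely, fix $m$ and start from the given family $\{z_v\}\in\prod_{v\in\Omega_k}Z_0(X_v)$ of degree $\delta$ orthogonal to $Br(X)$. First I would apply a moving lemma to replace each $z_v$, without altering its image in $\tilde{H}^{2d}(X_v,\Z/m\Z(d))$, by a cycle supported on closed points whose images under $f$ lie in a generalized Hilbert subset $H\subset C$ over which every fibre satisfies the Hasse principle (resp. weak approximation). Pushing forward along $f$ then yields a local family $\{f_*z_v\}$ of degree $\delta$ on $C$. The delicate point is orthogonality: by the projection formula and functoriality of $f^*:Br(C)\to Br(X)$, the family $\{f_*z_v\}$ is automatically orthogonal to the image $f^*Br(C)$, but the hypothesis only gives orthogonality to the larger group $Br(X)$. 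To bridge this gap I would invoke Harari's formal lemma, the quotient $Br(X)/f^*Br(C)$ being controlled by finitely many ``horizontal'' classes; after enlarging the support of the $z_v$ at finitely many auxiliary places one arranges that these classes impose no further constraint, so that orthogonality to $Br(X)$ is upgraded to orthogonality of $\{f_*z_v\}$ to $Br(C)$.

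Granting this, the curve case produces a global zero-cycle $z_C\in Z_0(C)$ of degree $\delta$, supported on $H$, whose class modulo $m$ matches $f_*z_v$ at every place. Over each point of its support the fibre satisfies the Hasse principle, so the corresponding local lifts assemble into a global zero-cycle $z_m\in Z_0(X)$ of degree $\delta$, and weak approximation on these fibres lets me match the images of $z_m$ and $z_v$ in $\tilde{H}^{2d}(X_v,\Z/m\Z(d))$ at each $v$.

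I expect the main obstacle to be the simultaneous control of the ``vertical'' part of $Br(X)$ (classes supported on the degenerate fibres of $f$) and the finitely many horizontal classes, while keeping the local cycle classes prescribed modulo $m$ at a given finite set of places. Coordinating the choice of the Hilbert set $H$, the auxiliary places, and the modulus $m$ so that the formal lemma applies and the chosen fibres are genuinely good is the technical core of the proof.
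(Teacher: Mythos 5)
There is a genuine gap, and it is structural: the statement you set out to prove is Conjecture \ref{conj2}, which concerns an \emph{arbitrary} smooth projective geometrically integral variety $X$ over a number field; no fibration is given. The paper does not prove this statement at all --- it is recorded as an open conjecture of Colliot-Th\'el\`ene --- and its main theorems (\ref{thm4}, \ref{thm5}) establish only a weak version of it, for varieties that happen to be fibred over a curve $C$ with geometrically integral fibres, under the additional assumptions that $\sha(Jac(C))$ is finite and that the fibres over a generalized Hilbert subset satisfy the Hasse principle (resp.\ weak approximation). Your argument silently imports all of these hypotheses: the fibration $f:X\to C$, the finiteness of $\sha(Jac(C))$ (needed for the curve case you invoke), and the arithmetic hypotheses on the fibres. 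None of them follow from the statement, so at best you are sketching the paper's main theorem, not proving the conjecture. Moreover, even under those hypotheses the conclusion reachable by this route is strictly weaker than the conjecture: the paper only matches the images in $\tilde{H}^{2d}(X_v,\Z/m\Z(d))$ at the \emph{finite} places (the real places are blocked by the absence of a suitable Gysin map for the modified cohomology, as the paper's closing remarks explain), whereas Conjecture \ref{conj2} demands agreement at every place.

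There are also internal problems with the sketch. The orthogonality ``gap'' you propose to close with Harari's formal lemma is backwards: since $f^*Br(C)\subset Br(X)$, orthogonality of $\{z_v\}$ to $Br(X)$ already gives, by the projection formula $\langle z_v,f^*\beta\rangle=\langle f_*z_v,\beta\rangle$, orthogonality of $\{f_*z_v\}$ to $Br(C)$; no formal lemma is needed, and the paper never uses one (its theorems even assume only orthogonality to the smaller group $\pi^*Br(C)$). Conversely, your moving step does not typecheck: a generalized Hilbert subset $\textsf{Hil}$ consists of closed points of the $k$-variety $C$, while the supports of the local cycles $z_v$ map to closed points of $C_v$, so one cannot ``move $z_v$ into $\textsf{Hil}$''. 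The paper's actual mechanism is different: after moving supports off the bad fibres (Lemme \ref{deplacement}), it uses the curve theorem (\ref{CTresult}) to produce one global zero-cycle $y$ on $C$ with $z_v+mx_v\in Z_0(X_v/y)$, then an effective Hilbert irreducibility lemma (\ref{simplified}) to produce a \emph{single} global closed point $\theta\in\textsf{Hil}$ of large degree, rationally equivalent to a translate of $y$ and close to the local data at finitely many places; the arithmetic hypothesis is applied to the one fibre $X_\theta$ over the number field $k(\theta)$, and the conclusion is carried back to $X$ by Gysin maps. Your final step, in which ``local lifts assemble into a global zero-cycle'', has no justification: lifts over local points of $C_v$ do not glue into a global cycle on $X$.
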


La conjecture \ref{conj1} pour une courbe est premièrement montrée par Saito \cite{Saito}
(en supposant la finitude du groupe $\sha(Jac(C))$).
Pour les variétés de dimension supérieure, peu de résultats en toute généralité sont établis.
Quand $X$ admet une structure de fibration au-dessus d'une courbe,
Colliot-Thélène, Skorobogatov, et Swinnerton-Dyer ont montré, dans \cite{CT-Sk-SD},
la conjecture \ref{conj1} pour une fibration $X\To \mathbb{P}^1$ à fibres satisfaisant le principe de Hasse pour les
zéro-cycles de degré $1,$ en faisant une hypothèse
(H fibre) sur les fibres singulières; dans son article \cite{Frossard}, Frossard
a montré la conjecture \ref{conj1} pour une fibration $X\To C$
en variétés de Severi-Brauer d'indice sans facteur carré
au-dessus d'une courbe $C$ (en supposant la finitude du groupe $\sha(Jac(C))$)
si $k$ est totalement imaginaire.
Des variantes de la conjecture \ref{conj2} sont également discutées dans ces articles.

Dans le présent travail, notre résultat principal est le suivant (voir le texte
(\ref{notionAF} et \ref{hilbertien})
pour les notions de  \og \emph{approximation faible au niveau de la cohomologie} \fg{}
et \og \emph{sous-ensemble hilbertien généralisé} \fg{},
et pour les énoncés plus précis avec plus de détails).

\begin{thmp}[Théorèmes \ref{thm4}, \ref{thm5}]\label{mainthm1}\ \\
Soit $\pi:X\To C$ une fibration à fibres géométriquement intègres avec $\sha(Jac(C))$ fini.
Soit $\textsf{Hil}$ un sous-ensemble hilbertien généralisé de $C.$
Supposons que pour tout point fermé $\theta\in \textsf{Hil},$
la fibre $X_\theta$ satisfait le principe
de Hasse (resp. l'approximation faible au niveau de la cohomologie aux places finies)
pour les zéro-cycles de degré $1.$

Alors, l'obstruction de Brauer-Manin associée au sous-groupe $\pi^*Br(C)\subset Br(X)$
au principe de Hasse (resp. à l'approximation faible au niveau de la cohomologie aux places finies) est la seule
pour les zéro-cycles de degré $1$ sur $X.$
\end{thmp}

Pour une fibration au-dessus d'une courbe vérifiant les hypothèses du théorème,
ce résultat démontre la conjecture \ref{conj1},
et il démontre une version faible de la conjecture \ref{conj2}.
Dans le cas particulier où toutes les fibres sont géométriquement intègres,
le théorème 0.3 de Frossard \cite{Frossard} est généralisé par le théorème principal ici.
Avec une hypothèse plus forte sur les fibres, le théorème principal étend le théorème 4.1
de Colliot-Thélène/Skorobogatov/Swinnerton-Dyer \cite{CT-Sk-SD} au cas où la courbe en bas $C$ est
de genre quelconque (avec $\sha(Jac(C))$ fini) au lieu de $\mathbb{P}^1.$
Le théorème principal est aussi une version pour les zéro-cycles de degré $1$ sur $X\To C$
parallèle au théorème 1 de Skorobogatov \cite{Skorobogatov}.

Comme application, on considère certaines fibrations en surfaces de Châtelet au-dessus d'une courbe sur un corps de nombres $k.$
Dans \cite{Poonen}, Poonen a trouvé une variété $X$ de dimension $3,$ qui est une fibration en surfaces de
Châtelet au-dessus d'une courbe, telle que l'obstruction de Brauer-Manin\footnote{De plus, il n'y a pas
d'obstruction de Brauer-Manin appliquée aux revêtements étales, \textit{cf.} \cite{Poonen} pour plus de détails.}
au principe de Hasse pour les
points rationnels disparaît mais il n'existe pas de point $k$-rationnel. Dans ce cas-là, \textit{a fortiori},
il n'y a pas d'obstruction de Brauer-Manin au principe de Hasse pour les zéro-cycles de degré $1.$ Récemment,
dans son article \cite{CTsurPoonen}, Colliot-Thélène a démontré l'existence d'un zéro-cycle de degré $1$
sur les solides de Poonen. Le théorème principal ici retrouve le résultat de Colliot-Thélène
si l'on suppose la finitude de $\sha(Jac(C)),$ et de plus affirme
qu'il y a beaucoup de zéro-cycles de degré $1$ au sens de l'approximation faible.

Dans son travail en cours \cite{Wittenberg}, Wittenberg démontre aussi un théorème parallèle à
\ref{thm4} et \ref{thm5} sous l'hypothèse (H fibre), qui est plus faible que ``toute fibre est géométriquement
intègre'', mais là il doit faire les hypothèses arithmétiques sur toutes les
fibres lisses, ceci l'empêche d'appliquer à l'exemple de Poonen. On va généraliser les résultats de Wittenberg
et de ce travail dans l'article en progrès de l'auteur \cite{Liang2}, certaines fibrations au-dessus de $\mathbb{P}^n$
sont aussi considérées.

\medskip
Le texte est organisé comme suit.
Dans la première partie de cet article, on introduit les notions nécessaires pour ce travail. Ensuite, on explique
en détails un résultat important sur
l'approximation pour les zéro-cycles sur une courbe quelconque.
Dans la seconde partie, on énonce les théorèmes plus précis \ref{thm4}, \ref{thm5},
et on discute en détails quelques applications du théorème principal, y compris l'application aux solides
de Poonen.
Dans la dernière partie, on montre ces théorèmes.
Premièrement, on rappelle des lemmes de déplacement et l'estimation de Lang-Weil.
Ensuite, on établit un lemme d'irréductibilité de Hilbert pour certains zéro-cycles effectifs.
Enfin, on montre une proposition clé, ainsi que les théorèmes principaux avec l'aide de l'application de Gysin.

\smallskip
\small
\noindent \textbf{Remerciements.}
Ce travail s'est accompli sous la direction de D.Harari.
Je tiens à le remercier de m'avoir proposé ce sujet,
pour ses nombreuses discussions très utiles pendant la préparation de ce travail, et pour son aide pour le français.
Je remercie également J.-L. Colliot-Thélène de m'avoir bien expliqué tous les détails de son article
\cite{CT99HP0-cyc}, pour m'avoir permis de présenter ici sa preuve du théorème \ref{CTresult}, et
pour ses suggestions précieuses.
Je remercie O. Wittenberg de sa patiente explication de son travail en cours \cite{Wittenberg},
et pour ses commentaires pertinents après la lecture de la première version de ce travail.

\normalsize


\section{Préliminaires}

On introduit les notions nécessaires pour cet article, plus particulièrement la notion de
sous-ensemble hilbertien généralisé est introduite et
la notion d'approximation faible/forte
au niveau de la cohomologie est discutée en détails. Ensuite, on explique un résultat sur les courbes, qui sera
utile pour la preuve des théorèmes principaux.


\subsection{Notions générales}
Dans ce travail, $k$ est toujours un corps de nombres,
on note $\Omega_k$ (ou simplement $\Omega$) l'ensemble des places de $k,$
on note $\Omega^\textmd{f}$ (resp. $\Omega^\infty$) le sous-ensemble des places non archimédiennes (resp. archimédiennes).
On note $k_v$ le complété $v$-adique de $k$ pour chaque $v\in\Omega.$ On fixe une clôture algébrique $\bar{k}$ (resp. $\bar{k}_v$)
de $k$ (resp. de $k_v$), telle que le diagramme suivant soit commutatif pour toute place $v.$
\SelectTips{eu}{12}$$\xymatrix@C=20pt @R=10pt{
\bar{k}\ar[r]&\bar{k}_v
\\ k\ar[r]\ar[u]&k_v\ar[u]
}$$

On ne considère que les \emph{fibrations} $\pi:X\To C$ au-dessus une courbe, \textit{i.e.}
$X$ et $C$ sont des variétés projectives lisses et géométriquement intègres sur $k,$
$dim(C)=1,$ le morphisme $\pi$ est non constant (donc plat et surjectif), et on suppose toujours que\ \\
$\star$ la fibre générique $X_\eta$ est une variété géométriquement intègre sur $K,$
où $\eta=Spec(K)$ est le point générique de $C$ avec $K=k(C)$ le corps des fonctions de $C.$

Il existe un ouvert de Zariski non vide $U$ de $C$ tel que pour
tout point $\theta\in U$ la fibre $X_\theta$ soit lisse et géométriquement intègre sur $k(\theta),$
on note $D=C\setminus U$ son complémentaire, $D$ est alors un ensemble fini de points fermés.
L'expression ``presque tout" signifie ``tout à l'exception d'un nombre fini".

\begin{defn}
(1)Soit $z=\sum n_iP_i\in Z_0(C)$ un zéro-cycle de $C$ (avec les points fermés $P_i$ différents deux à deux).
On dit qu'il est \emph{séparable} si $n_i\in\{0,1,-1\}$ pour tout $i,$
il est \emph{déployé} (relativement à une fibration fixée $\pi:X\To C$)
s'il existe un $k(P_i)$-point rationnel sur la fibre $X_{P_i}$
pour tout $i.$
(On pourrait dire que $z$ est déployé lorsqu'il existe un zéro-cycle de degré $1$ sur chaque fibre $X_{P_i}/k(P_i),$
mais dans le présent article on trouvera toujours qu'il existe vraiment un point rationnel dans chaque fibre.)

(2)Soit $y\in Z_0(C)$ un zéro-cycle,
on pose $$Z_0(X/y)=\{z\in Z_0(X); \pi_*(z)\sim y\},$$ où dans ce travail $\sim$ dénote l'équivalence rationnelle.

(3)Étant donné $P$ un point fermé de $X_v=X\times_kk_v,$ on fixe un $k_v$-plongement $k_v(P)\To \bar{k}_v,$
$P$ est vu comme un point $k_v(P)$-rationnel de $X_v.$
On dit qu'un point fermé $Q$ de $X_v$ est \emph{suffisamment proche}  de $P$ (par rapport à un voisinage $U_P$ de $P$ dans
l'espace topologique $X_v(k_v(P))$), si $Q$ a corps résiduel $k_v(Q)=k_v(P)$ et si l'on peut choisir un
$k_v$-plongement $k_v(Q)\To \bar{k}_v$ tel que $Q,$ vu comme un $k_v(Q)$-point rationnel de $X_v,$ soit contenu dans $U_P.$
En étendant $\mathbb{Z}$-linéairement, cela a un sens de dire que $z'_v\in Z_0(X_v)$ est suffisamment proche de $z_v\in Z_0(X_v)$
(par rapport à
un système de voisinages des points qui apparaissent dans le support de $z_v$), et on note $z'_v\approx z_v$ dans ce cas.
En particulier, $deg(z'_v)=deg(z_v)$ si $z'_v\approx z_v.$
D'après la continuité de l'accouplement de Brauer-Manin, \textit{cf.} \cite{Ducros} Partie II (0.31),
pour un sous-ensemble \emph{fini} $B\subset Br(X_v),$ on a
$\langle z'_v,b \rangle _v= \langle z_v,b \rangle _v\in Br(k_v)$ pour tout $b\in B$ si
$z'_v\approx z_v$ (par rapport à $B$).
\end{defn}


\subsection{Sous-ensembles hilbertiens généralisés}
On généralise la notion de sous-ensemble hilbertien des points rationnels à des points fermés.

\begin{defn}\label{hilbertien}
Soit $X$ une variété géométriquement intègre sur un corps de nombres $k.$
Un sous-ensemble $\textsf{Hil}\subset X$ de points fermés
de $X$ est dit un \emph{sous-ensemble hilbertien généralisé} s'il existe un morphisme étale fini
$Z\buildrel\rho\over\To U\subset X$ avec $U$ un ouvert non vide de $X$
et $Z$ intègre tel que $\textsf{Hil}$ soit l'ensemble des points fermés
$M$ de $U$ pour lesquels $\rho^{-1}(M)$ est connexe.
\end{defn}

Si $\textsf{Hil}$ est un sous-ensemble hilbertien généralisé de $X,$ $\textsf{Hil}\cap X(k)$
est alors un sous-ensemble hilbertien (classique), \textit{cf.} \cite{Harari}, 3.2.
Les points fermés d'un ouvert non vide de $X$ forment
un sous-ensemble hilbertien généralisé de $X.$

Si $X$ est une variété normale, soit $\textsf{Hil}_i$ ($i=1,2$) un sous-ensemble hilbertien généralisé de $X$ défini par
$Z_i\To U_i\subset X,$ on note $K$ (resp. $K_i$) le corps des fonctions de $X$ (resp. de $Z_i$) et $L=K_1K_2$ la composition
de $K_1$ et $K_2$ dans $\bar{K}.$  L'extension $L$ définit alors un revêtement fini $Z\To X,$
qui est étale au-dessus d'un ouvert non vide $U$ de $U_1\cap U_2\subset X,$
ainsi définit un sous-ensemble hilbertien généralisé
$\textsf{Hil}\subset \textsf{Hil}_1\cap \textsf{Hil}_2.$


\subsection{Approximation faible/forte au niveau de la cohomologie}
On rappelle la discussion dans \cite{CT99HP0-cyc}.
Soit $X$ une variété propre géométriquement intègre.
On a un complexe de groupes abéliens
\SelectTips{eu}{12}$$\xymatrix@C=20pt @R=10pt{
CH_0(X)\ar[r]^-{\psi}&\prod_{v\in\Omega}CH_0(X_v)\ar[r]^-{\gamma}&Hom(Br(X),\mathbb{Q}/\mathbb{Z})
}$$
où l'application $\gamma$ est induite par l'accouplement de Brauer-Manin
$$\prod_{v\in\Omega}CH_0(X_v)\times Br(X)\To \mathbb{Q}/\mathbb{Z}.$$

Pour un entier strictement positif $m,$
on a l'application cycle $\phi_m:CH_0(X)\to H^{2d}(X,\mathbb{Z}/m\mathbb{Z}(d))$ qui se factorise
à travers $CH_0(X)/m,$ où $d=dim(X).$

Pour chaque place $v,$ on a la cohomologie modifiée $\tilde{H}^{2d}(X_v,\mathbb{Z}/m\mathbb{Z}(d))$
et une application canonique $H^{2d}(X_v,\mathbb{Z}/m\mathbb{Z}(d))\To\tilde{H}^{2d}(X_v,\mathbb{Z}/m\mathbb{Z}(d)),$
qui est un isomorphisme (resp. une flèche nulle) si $v$ est une place non archimédienne (resp. une place complexe),
voir \cite{Saito0} et \cite{CT99HP0-cyc} \S 1 pour plus d'informations. On note
$\phi_{m,v}$ le composé de cette flèche et l'application cycle pour $X_v.$

Pour un entier strictement positif $m,$
on trouve le diagramme commutatif suivant, dont la première ligne est un complexe
et la deuxième ligne est une suite exacte de groupes abéliens (\textit{cf.} \cite{Saito0}, \cite{CT99HP0-cyc} \S 1).
\SelectTips{eu}{12}$$\xymatrix@C=20pt @R=20pt{
CH_0(X)\ar[d]^{\phi_m}\ar[r]^-{\psi}&\prod_{v\in\Omega}CH_0(X_v)\ar[d]^{\prod_v\phi_{m,v}}\ar[r]^{\gamma}&Hom(Br(X),\mathbb{Q}/\mathbb{Z})\ar[d]
\\H^{2d}(X,\Z/m\Z(d))\ar[r]^-{\psi_m^H}&\prod'_{v\in\Omega}\tilde{H}^{2d}(X_v,\Z/m\Z(d))\ar[r]&H^2(X,\mathbb{Z}/m\mathbb{Z}(1))^*
}$$
Ici, le groupe $\prod'_{v\in\Omega}\tilde{H}^{2d}(X_v,\Z/m\Z(d))$ est défini comme le produit restreint des
$\tilde{H}^{2d}(X_v,\Z/m\Z(d))$ par rapport aux sous-groupes
$H^{2d}(\mathcal{X}_v,\Z/m\Z(d))\subset H^{2d}(X_v,\Z/m\Z(d)),$ où $\mathcal{X}$ est un modèle entier de $X$ au-dessus d'un
ouvert non vide de $Spec(O_k).$ Le groupe
$H^2(X,\mathbb{Z}/m\mathbb{Z}(1))^*=Hom(H^2(X,\mathbb{Z}/m\mathbb{Z}(1)),\mathbb{Q}/\mathbb{Z})$ est
le dual de Pontryagin de $H^2(X,\mathbb{Z}/m\mathbb{Z}(1)).$

\begin{defn}\label{notionAF}Soit $S\subset\Omega_k$ un ensemble (pas forcément fini) de places de $k.$\ \\
\indent($\star$)On dit que $X$ satisfait \emph{l'approximation au niveau de la cohomologie en $S$} pour les zéro-cycles,
si pour tout $m\in\mathbb{Z}_{>0}$ et pour toute famille $\{z_v\}\in\prod_{v\in\Omega}CH_0(X_v),$
il existe une classe de zéro-cycle global $z=z_m\in CH_0(X)$ tel que $z$ et $\{z_v\}_{v\in S}$ ont même image
dans $\prod_{v\in S}\tilde{H}^{2d}(X_v,\Z/m\Z(d)).$

($\star\star$)On dit que \emph{l'obstruction de Brauer-Manin est la seule à l'approximation au niveau de la cohomologie en $S$},
si pour tout $m\in\mathbb{Z}_{>0}$ et pour toute famille $\{z_v\}\in\prod_{v\in\Omega}CH_0(X_v)$ orthogonale à $Br(X),$
il existe une classe de zéro-cycle global $z=z_m\in CH_0(X)$  tel que $z$ et $\{z_v\}_{v\in S}$ ont même image
dans $\prod_{v\in S}\tilde{H}^{2d}(X_v,\Z/m\Z(d)).$

\medskip
- On dit \emph{l'approximation forte} (resp. \emph{l'approximation forte aux places finies})
si ($\star$) vaut pour  $S=\Omega$ (resp. $S=\Omega^\textmd{f}$).

- On dit \emph{l'approximation faible} (resp. \emph{l'approximation faible aux places finies}) si ($\star$)
vaut pour tout sous-ensemble fini $S\subset\Omega$ (resp. $S\subset\Omega^\textmd{f}$).

- De manière similaire, on définit \emph{l'obstruction de Brauer-Manin est la seule à l'approximation faible/forte}
(resp. \emph{aux places finies}) en utilisant ($\star\star$).
\end{defn}

\begin{rem}\label{remark-definition AF}\ \\
\indent(i)On peut également définir l'obstruction associée à un sous-groupe $B\subseteq Br(X)$
en remplaçant l'application $\gamma$ par la composition de $\gamma$ avec la surjection
$$Hom(Br(X),\mathbb{Q}/\mathbb{Z})\twoheadrightarrow Hom(B,\mathbb{Q}/\mathbb{Z}).$$
Ici, pour une fibration $\pi:X\to C,$
on considère souvent l'obstruction de Brauer-Manin associée au sous-groupe $\pi^*Br(C)\subseteq Br(X).$

(ii)On peut restreindre le diagramme à sous-ensemble $CH_0(X)^\delta=deg^{-1}(\delta)\subset CH_0(X)$ pour définir la
notion d'approximation pour \emph{les zéro-cycles de degré $\delta$}, où
$deg:CH_0(X)\to\mathbb{Z}$ est l'application degré.

En remplaçant les groupes $\tilde{H}^{2d}(X_v,\mathbb{Z}/m\mathbb{Z}(d))$
(resp. $H^{2d}(X,\mathbb{Z}/m\mathbb{Z}(d))$) par $CH_0(X_v)/m$ (resp. $CH_0(X)/m$),
on peut aussi définir la notion d'approximation faible/forte \emph{au niveau du groupe de Chow}.

(iii)Soit $m$ un entier strictement positif fixé,
si $z'_v$ est suffisamment proche de $z_v$ pour $v\in S\subset\Omega^\textmd{f},$ pour tout élément $b$ du groupe
fini\footnote{La finitude de ${_mBr(X_v)}$ se déduit de la suite de Kummer sur $X_v$ plus le théorème de finitude
de la cohomologie étale et la suite spectrale d'Hochschild-Serre.} ${_mBr(X_v)},$  on
a $ \langle z'_v,b \rangle _v= \langle z_v,b \rangle _v\in {_mBr(k_v)}$ pour toute $v\in S\subset\Omega^\textmd{f},$
leurs images dans $\prod_{v\in S}\tilde{H}^{2d}(X_v,\Z/m\Z(d))$
coïncident car le morphisme $$\phi_{m,v}:CH_0(X_v)\To \tilde{H}^{2d}(X_v,\mathbb{Z}/m\mathbb{Z}(d))$$
se factorise comme
$$CH_0(X_v)/m\To ({_mBr(X_v)})^*\subset H^2(X_v,\mathbb{Z}/m\mathbb{Z}(1))^*={H}^{2d}(X_v,\mathbb{Z}/m\mathbb{Z}(d))$$
pour toute $v$ non archimédienne (\textit{cf.} \S 2 de \cite{CT99HP0-cyc} et 2.1(3) de
\cite{Saito0})(on ne peut pas dire grand-chose sur les places réelles).

(iv)Il n'y a pas de raison que l'approximation faible pour les points rationnels au sens classique implique
l'approximation faible au niveau de la cohomologie pour les zéro-cycles (de degré $1$).
\end{rem}


\subsection{Cas où $X=C$ est une courbe}

Les résultats pour $X=C$ une courbe viennent principalement des articles de S. Saito \cite{Saito} et de Colliot-Thélène
\cite{CT99HP0-cyc}. Dans son article \cite{Saito}, S. Saito a été le premier à montrer que,
en supposant la finitude de $\sha(Jac(C)),$
l'obstruction de
Brauer-Manin est la seule au principe de Hasse pour les zéro-cycles de degré $1$. Dans \cite{Eriksson},
Eriksson et Scharaschkin ont donné un résultat plus précis et une démonstration plus simple du même résultat.
Dans \cite{CT99HP0-cyc}, Colliot-Thélène a discuté l'approximation pour les zéro-cycles, en particulier,
il a redémontré le résultat de Saito.
Avec un argument supplémentaire dû à Colliot-Thélène, on a le résultat plus utile
et plus précis suivant,
qui implique que l'obstruction de Brauer-Manin est la seule à l'approximation forte au niveau du groupe de Chow
pour les zéro-cycles de degré $1.$

\begin{thm}[Colliot-Thélène]\label{CTresult}
Soit $C$ une courbe projective lisse géométriquement intègre sur un corps de nombres $k$ avec $\sha(Jac(C))$ fini.
On suppose qu'il existe une famille de zéro-cycles $\{z_v\}_{v\in\Omega}$ de degré $\delta$ telle que
$\{z_v\}\bot Br(C).$

On fait l'hypothèse qu'il existe un zéro-cycle de degré $1$ sur $C.$

Alors, pour tout entier strictement positif $m,$ il existe un zéro-cycle $z_m\in Z_0(C)$ de degré $\delta$
et pour chaque $v\in\Omega$ un zéro-cycle $t_v$ de degré $0,$ tels que
$z_m\sim z_v+mt_v\in Z_0(C_v)$ pour toute $v\in\Omega.$
\end{thm}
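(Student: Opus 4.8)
The plan is to transport everything to the Jacobian $A=Jac(C)$ and to deduce the statement from Poitou--Tate duality together with the nondegeneracy of the Cassels--Tate pairing, which is where the finiteness of $\sha(Jac(C))$ enters. Since $CH_0(C)=Pic(C)$ and the degree map is surjective by the hypothesis that $C$ carries a zero-cycle of degree $1$, I would fix such a cycle $z_0$ and write any degree-$\delta$ class as $\delta z_0+a$ with $a$ in the degree-$0$ part $Pic^0=A(k)$; locally, set $a_v:=[z_v]-\delta z_0\in A(k_v)$. The desired conclusion $z_m\sim z_v+mt_v$ with $\deg t_v=0$ is then exactly the assertion that, for each fixed $m$, there is $a\in A(k)$ with $a\equiv a_v\pmod{m\,A(k_v)}$ for all $v$ simultaneously. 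Using that $Br(\bar C)=0$ and that $z_0$ splits $Br(k)\hookrightarrow Br(C)$, the relevant part of $Br(C)$ is $H^1(k,A)$, which autoduality of the Jacobian identifies with $H^1(k,\hat A)$; under this identification the Brauer--Manin pairing on the curve becomes the sum of the local Tate pairings $A(k_v)\times H^1(k_v,\hat A)\To\Q/\Z$, so the hypothesis $\{z_v\}\bot Br(C)$ says precisely that $\{a_v\}$ is orthogonal to the global image of $H^1(k,\hat A)$.

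Next I would reduce modulo $m$ and exploit finiteness. For almost all $v$ one has $A(k_v)/m=0$ (good reduction and $v\nmid m$), so $\prod_v A(k_v)/m$ is a finite group and no completion subtleties arise for fixed $m$. The Weil pairing and the principal polarization identify $A[m]$ with its own Cartier dual (suitably twisted), so $A[m]$ is self-dual for local Tate duality; at the real places one must use the modified (Tate) cohomology, matching the groups $\tilde H$ of the paper. The middle term of the Poitou--Tate nine-term sequence for $A[m]$ then reads: the image of $H^1(k,A[m])\to\prod_v H^1(k_v,A[m])$ is its own exact annihilator under the sum of local pairings. Via the local Kummer maps $A(k_v)/m\hookrightarrow H^1(k_v,A[m])$ the family $\{a_v\bmod m\}$ defines a class in $\prod_v H^1(k_v,A[m])$, and the orthogonality established above says exactly that this class annihilates the image of $H^1(k,A[m])$. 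Poitou--Tate therefore yields a global class $\xi\in H^1(k,A[m])$ with $\mathrm{loc}_v(\xi)=a_v\bmod m$ for every $v$.

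It remains to descend $\xi$ to $A(k)/m$, i.e.\ to kill its image $\beta$ under $H^1(k,A[m])\to H^1(k,A)[m]$. Since each $\mathrm{loc}_v(\xi)$ lies in the Kummer image $A(k_v)/m$, the local restrictions of $\beta$ vanish, so $\beta\in\sha(A)[m]$. The indeterminacy in $\xi$ is a class in $\sha^1(k,A[m])$, which modifies $\beta$ within the image of $\sha^1(k,A[m])\to\sha(A)[m]$; hence the genuine obstruction is the residual class $\bar\beta$ in $\mathrm{coker}\big(\sha^1(k,A[m])\to\sha(A)[m]\big)$. If I can show $\bar\beta=0$, then a suitable choice of $\xi$ lands in $A(k)/m$, producing the required global $a\in A(k)$ and hence $z_m$.

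The hard part is precisely the vanishing of $\bar\beta$, and this is where finiteness of $\sha(Jac(C))$ is indispensable. Finiteness makes the Cassels--Tate pairing $\sha(A)\times\sha(\hat A)\To\Q/\Z$ nondegenerate; combined with $\hat A\cong A$ it identifies the above cokernel with the $\Q/\Z$-dual of $\sha(A)/m$, so that $\bar\beta$ is detected by the Cassels--Tate self-pairing against $\sha(A)$. The crux is then to verify that this pairing value is computed by the same sum of local invariants as the Brauer--Manin pairing that we assumed to vanish; granting this compatibility, $\bar\beta=0$ follows. I expect the two delicate points to be: first, the precise matching of the Cassels--Tate pairing with the Brauer--Manin pairing on $\sha(A)[m]$; and second, the bookkeeping at the archimedean (especially real) places, where the modified cohomology $\tilde H$ must be used so that local duality and the degree-$\delta$ congruences remain consistent. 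The degree-$1$ hypothesis is used throughout to split off $Br(k)$ and to keep the degree of $z_m$ equal to $\delta$.
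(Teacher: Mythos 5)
Your proposal takes a genuinely different route from the paper's. The paper quotes van Hamel's exact sequence $\widehat{J(k)}\To\prod_{v\in\Omega}J(k_v)_\bullet\To Hom(Br(C),\Q/\Z)$ as a black box (this is where Poitou--Tate, the Cassels--Tate pairing and the finiteness of $\sha$ are hidden), and then only has to reduce modulo $m$ (Mordell--Weil gives $\widehat{J(k)}/m\simeq J(k)/m$) and to check by hand that $J(k_v)_\bullet/m=J(k_v)/m$ at archimedean places. You instead try to reprove the duality statement at level $m$. Your first steps are sound and essentially follow the standard proof of the Cassels--Tate dual exact sequence: the reduction to finding $a\in A(k)$ with $a\equiv a_v\ (\bmod\ mA(k_v))$, the translation of the Brauer--Manin hypothesis into orthogonality to the image of $H^1(k,\hat{A})$, the Poitou--Tate construction of $\xi\in H^1(k,A[m])$ with $loc_v(\xi)=\kappa(a_v)$, and the identification of the residual obstruction $\bar\beta$ in $\mathrm{coker}\bigl(\sha^1(k,A[m])\To\sha(A)[m]\bigr)$. (One side claim is false: it is not true that $A(k_v)/m=0$ for almost all $v$; for $v$ of good reduction prime to $m$ one has $\#A(k_v)/m=\#A(k_v)[m]$, which is nonzero for instance for the infinitely many $v$ splitting completely in $k(A[m])$. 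What you actually need, and what is true, is that for such $v$ the Kummer image coincides with the unramified subgroup, so that your family lies in the restricted product where Poitou--Tate applies.)

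The genuine gap is the last step, the vanishing of $\bar\beta$, which is precisely where the finiteness of $\sha$ must do its work, and the mechanism you propose does not hold up. First, nondegeneracy of the Cassels--Tate pairing yields a perfect pairing $\sha(A)[m]\times\sha(\hat{A})/m\To\Q/\Z$, but it does not identify $\mathrm{coker}\bigl(\sha^1(k,A[m])\To\sha(A)[m]\bigr)$ with the dual of $\sha(A)/m$: by counting, that identification would force the image of $\sha^1(k,A[m])$ in $\sha(A)[m]$ to be zero, which nothing you have established implies; the statement you actually need (that this image is the exact annihilator of $\sha(\hat{A})[m]$) is a nontrivial theorem about the level-$m$ description of the Cassels--Tate pairing, not a formal consequence of nondegeneracy. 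Second, and more seriously, the deferred "compatibility" is vacuous in its natural reading: for global classes $\xi\in H^1(k,A[m])$ and $\eta\in H^1(k,\hat{A}[m])$ lifting $\gamma$, the sum $\sum_v inv_v\bigl(loc_v\xi\cup loc_v\eta\bigr)$ vanishes identically by reciprocity, independently of any hypothesis on $\{z_v\}$, so it cannot compute the (generally nonzero) Cassels--Tate pairing; the correct level-$m$ formula (cf. Poonen--Stoll) is genuinely more delicate, and the obvious repair (pairing $a_v$ against an $m$-th root $\gamma'$ of $\gamma$ in $H^1(k,\hat{A})$) only covers those $\gamma$ divisible by $m$ there. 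There is, however, a clean way to finish from where you stand, avoiding the Cassels--Tate pairing altogether: run your Poitou--Tate step at every level $mn$; the solution sets $T_{mn}=\{\xi\in H^1(k,A[mn]):\ loc_v\xi=\kappa(a_v)\ \forall v\}$ are nonempty cosets of the finite groups $\sha^1(k,A[mn])$, hence form an inverse system of nonempty finite sets whose inverse limit is nonempty; for a compatible system one has $\beta_m=n\,\beta_{mn}$ for all $n$, so taking $n$ equal to the exponent of the finite group $\sha(A)$ gives $\beta_m=0$. This soft use of finiteness (essentially Milne's proof of the dual exact sequence, i.e. of the very theorem of van Hamel the paper invokes) closes the gap; the archimedean bookkeeping you flag is then handled exactly as in the paper, via the divisibility of $J(\mathbb{R})^{o}=N_{\mathbb{C}/\mathbb{R}}(J(\mathbb{C}))$.
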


\begin{proof}
On fixe un zéro-cycle de degré un $z_0\in Z_0(C).$
On trouve une immersion fermée $C\To J$ définie par $P\mapsto P-z_0,$ où $J=Jac(C)$ est la jacobienne de $C.$
On identifie le groupe des classes de zéro-cycles de degré zéro $CH_0(C)^0$ avec $J(k)$
(resp. $CH_0(C_v)^0$ avec $J(k_v)$ pour toute $v\in\Omega$).

D'après le théorème 0.2 (appliqué à $id:C\to C$) de van Hamel \cite{vanHamel} (et la remarque 1.1(ii) de \cite{Wittenberg}),
on trouve le diagramme suivant dont la première ligne est une suite exacte.
\SelectTips{eu}{12}$$\xymatrix@C=20pt @R=10pt{
\widehat{J(k)}\ar[r]\ar[d] & \prod_{v\in\Omega}J(k_v)_\bullet\ar[r]\ar[d] & Hom(Br(C),\mathbb{Q}/\mathbb{Z})\\
\widehat{J(k)}/m\widehat{J(k)}\ar[r] & \prod_{v\in\Omega}J(k_v)_\bullet/mJ(k_v)_\bullet &
}$$
Ici $\widehat{\mbox{       }}$ est le complété profini, d'après le théorème de Mordell-Weil
$J(k)/mJ(k)\simeq \widehat{J(k)}/m\widehat{J(k)}.$ Le groupe $J(k_v)_\bullet$ est le groupe compact $J(k_v)$ si $v$ est une place
non archimédienne, il est le groupe des composantes connexes $\pi_0(J(k_v))$ si $v$ est une place archimédienne.

Maintenant, soit  $\{z_v\}$ une famille de zéro-cycles de degré $\delta$ orthogonale à $Br(C),$
on note $z'=\delta z_0$ un zéro-cycle global  de degré $\delta.$
On considère $\{z'-z_v\}\in \prod_{v\in\Omega}J(k_v)_\bullet,$ il donne $0$ dans $Hom(Br(C),\mathbb{Q}/\mathbb{Z})$
car $\{z'-z_v\}\bot Br(C).$ Avec le diagramme ci-dessus, pour chaque $m$ entier strictement positif, on trouve
une classe de zéro-cycles $cl(z'_m)\in CH_0(C)^0=J(k)$
telle que les images de $z'_m$ et de $z'-z_v$ soient les mêmes dans $J(k_v)_\bullet/mJ(k_v)_\bullet$ pour toute $v\in\Omega.$
Si $v$ est une place complexe, $J(k_v)_\bullet/mJ(k_v)_\bullet=J(k_v)/mJ(k_v)=0$ car $J(\mathbb{C})$ est $m$-divisible.
Si $v$ est une place réelle, $J(k_v)_\bullet/mJ(k_v)_\bullet=(J(k_v)/2)/m=J(k_v)/mJ(k_v)$ car
$J(\mathbb{R})^o=N_{\mathbb{C}/\mathbb{R}}(J(\mathbb{C}))=2J(\mathbb{R})$ est $m$-divisible.
Si $v$ est une place non archimédienne, c'est clair que $J(k_v)_\bullet/mJ(k_v)_\bullet=J(k_v)/mJ(k_v).$
Donc $z_v$ et $z_m=z'-z'_m$ ont même image dans $J(k_v)/mJ(k_v)$ pour toute $v\in\Omega,$
ceci démontre l'énoncé voulu. On remarque que cet argument ne marche que pour une courbe.
\end{proof}

\begin{rem}\label{condition on CTresult}
(i)D'après \cite{CT99HP0-cyc}, Proposition 3.3, l'hypothèse qu'il existe un zéro-cycle
de degré $1$ sur $C$ est automatiquement satisfaite
lorsque'il existe une famille de zéro-cycles locaux de degré $1$ orthogonale à $Br(C)$
(en supposant la finitude du groupe $\sha(Jac(C))$).
C'est le cas si $C=\mathbb{P}^1$ ou si $\delta=1.$

(ii)Sans supposer qu'il existe un zéro-cycle de degré $1$ sur $C,$ la même conclusion du théorème \ref{CTresult}
reste valable quand
$k$ est totalement imaginaire.
En fait, on peut faire l'argument suivant.
On fixe un zéro-cycle global $z_0$ de $C$ et on note $\delta_0$ son degré. Étant donnés
$\{z_v\}\bot Br(C)$ et $m\in\mathbb{Z}_{>0}$ comme dans le théorème \ref{CTresult}, vu l'injectivité
$CH_0(C_v)/m\hookrightarrow H^2(C_v,\mathbb{Z}/m\mathbb{Z}(1))=\tilde{H}^2(C_v,\mathbb{Z}/m\mathbb{Z}(1))$
pour toute $v\in\Omega^\textmd{f},$
la proposition 3.3 de \cite{CT99HP0-cyc} (appliquée au $m\delta_0$) donne un zéro-cycle global $z'_{m\delta_0}$
et un zéro-cycle local $x'_v$
pour chaque $v\in\Omega^\textmd{f},$ tels que $z'_{m\delta_0}\sim z_v+m\delta_0 x'_v$ pour toute $v\in\Omega^\textmd{f}.$
On note $\delta'$ le degré de $x'_v$ (qui ne dépend pas de $v\in\Omega^\textmd{f}$), on vérifie que
$z_m=z'_{m\delta_0}-m\delta'z_0$ et $t_v=\delta_0 x'_v-m\delta'z_0$ pour toute $v\in\Omega^\textmd{f}$ satisfont la conclusion
du théorème \ref{CTresult}. Pour une place complexe $v,$ le zéro-cycle $z_m-z_v$ est de degré $0,$ puisque
$CH_0(C_v)^0=J(\mathbb{C})$ est $m$-divisible,
où $CH_0(C_v)^0=ker(deg:CH_0(C_v)\To \mathbb{Z})$ et où $J$ est la jacobienne de $C,$
le zéro-cycle $z_m-z_v$ s'écrit alors comme $mt_v$ pour un certain $t_v\in Z_0(C_v).$

(iii)Soit $\{z_v\}_{v\in\Omega}$ une famille de zéro-cycles locaux sur $C,$
l'hypothèse $\{z_v\}\bot Br(C)$ implique que $deg(z_v)=\delta_v=\delta^\textmd{f}$ ne dépend pas de
$v\in\Omega^\textmd{f}.$
En fait,
soient $v_1,v_2\in\Omega^\textmd{f}$ deux places non archimédiennes différentes,
on considère $\{\alpha_v\}\in\bigoplus_{v\in\Omega} Br(k_v)$ tel que $inv_{v_1}(\alpha_{v_1})=1/n,$
$inv_{v_2}(\alpha_{v_2})=-1/n,$ et $inv_{v}(\alpha_{v})=0$ pour toutes les autres places $v.$
D'après la suite exacte $0\to Br(k)\to \bigoplus_{v\in\Omega} Br(k_v)\to \Q/\Z\to0,$ la famille
$\{\alpha_v\}_{v\in\Omega}$ provient d'élément $\alpha\in Br(k),$ on note encore $\alpha$ son image dans $Br(C).$
La condition $\langle\{z_v\},\alpha\rangle=0$ implique que $\delta_{v_1}\equiv\delta_{v_2}(mod\mbox{ }n).$
En variant $n\in\mathbb{Z}_{>0},$ on trouve $\delta_{v_1}=\delta_{v_2}.$
De la même manière, on a
$deg(z_v)\equiv\delta^\textmd{f}(mod\mbox{ }2)$ pour toute place réelle $v.$ Cet argument reste valable pour
une variété quelconque au lieu d'une courbe $C.$
\end{rem}


\section{Énoncés des théorèmes principaux et quelques applications}
Premièrement, on énonce les théorèmes plus précis \ref{thm4}, \ref{thm5}. Ensuite, on donne quelques applications.


\subsection{Théorèmes principaux}\ \\

\textbf{Principe de Hasse pour les zéro-cycles sur $X$}
\begin{thm}\label{thm4}
Soit $\pi:X\To C$ une fibration à fibres géométriquement intègres avec $\sha(Jac(C))$  fini.
Soient $\textsf{Hil}$ un sous-ensemble hilbertien généralisé de $C$ et $\delta$ un entier.
Supposons que pour tout point fermé $\theta\in \textsf{Hil},$
la fibre $X_\theta$ satisfait le principe de Hasse (pour les zéro-cycles de degré $1$ ou pour les points rationnels).

Alors, l'obstruction de Brauer-Manin associée au sous-groupe $\pi^*Br(C)\subset Br(X)$ est la seule
à l'existence d'un zéro-cycle de degré $1$ sur $X.$
\end{thm}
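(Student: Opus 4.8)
La strat\'egie consiste \`a ramener le probl\`eme sur $X$ \`a un probl\`eme sur la courbe $C$ via le morphisme $\pi$, puis \`a relever fibre par fibre un z\'ero-cycle global convenablement choisi sur $C.$ Partons d'une famille $\{z_v\}_{v\in\Omega}$ de z\'ero-cycles locaux de degr\'e $1$ sur $X,$ orthogonale \`a $\pi^*Br(C).$ La premi\`ere \'etape est la formule de projection: pour tout $b\in Br(C)$ on a $\langle z_v,\pi^*b\rangle_{v}=\langle \pi_*z_v,b\rangle_{v},$ de sorte que la famille $\{\pi_*z_v\}$ de z\'ero-cycles locaux de degr\'e $1$ sur $C$ est orthogonale \`a $Br(C).$ D'apr\`es la remarque \ref{condition on CTresult}(i) (qui utilise la finitude de $\sha(Jac(C))$), il existe un z\'ero-cycle de degr\'e $1$ sur $C;$ le th\'eor\`eme \ref{CTresult} fournit alors, pour tout entier $m>0,$ un z\'ero-cycle global $w_m\in Z_0(C)$ de degr\'e $1$ et des z\'ero-cycles locaux $t_v$ de degr\'e $0$ tels que $w_m\sim \pi_*z_v+mt_v$ dans $Z_0(C_v)$ pour toute $v.$

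L'\'etape d\'ecisive, qui constituera la proposition cl\'e, est de produire \`a partir de $w_m$ un z\'ero-cycle global $w=\sum_i n_i\theta_i$ sur $C,$ de degr\'e $1,$ \emph{s\'eparable}, \`a support dans $\textsf{Hil},$ et surtout \emph{d\'eploy\'e} au sens o\`u chaque fibre $X_{\theta_i}$ poss\`ede des z\'ero-cycles locaux de degr\'e $1$ en toute place de $k(\theta_i).$ Pour cela je fixerais un ensemble fini $S\subset\Omega$ contenant les places archim\'ediennes, les places de mauvaise r\'eduction et celles intervenant au-dessus de $D=C\setminus U.$ Aux places $v\in S,$ l'approximation fournie par le th\'eor\`eme \ref{CTresult} jointe aux lemmes de d\'eplacement permet de choisir le support de $w$ proche de celui de $\pi_*z_v:$ comme $z_v$ se projette sur $\pi_*z_v,$ les fibres au-dessus de ces points h\'eritent de z\'ero-cycles locaux de degr\'e $1$ provenant de $z_v.$ Aux places $v\notin S,$ les fibres sont lisses, g\'eom\'etriquement int\`egres et \`a bonne r\'eduction, et l'estimation de Lang-Weil jointe au lemme de Hensel garantit automatiquement l'existence de points locaux d\`es que le corps r\'esiduel est assez grand. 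Le lemme d'irr\'eductibilit\'e de Hilbert pour les z\'ero-cycles effectifs assure simultan\'ement que le support de $w$ tombe dans $\textsf{Hil}.$

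Une fois ce $w$ obtenu, chaque point ferm\'e $\theta_i\in\textsf{Hil}$ a une fibre $X_{\theta_i}$ localement soluble partout; l'hypoth\`ese du principe de Hasse (pour les z\'ero-cycles de degr\'e $1,$ ou pour les points rationnels) fournit alors un z\'ero-cycle global $\zeta_i$ de degr\'e $1$ sur $X_{\theta_i}/k(\theta_i).$ En poussant les $\zeta_i$ dans $X$ via l'immersion ferm\'ee $X_{\theta_i}\hookrightarrow X$ et en formant $z=\sum_i n_i\zeta_i,$ on obtient $\pi_*z=\sum_i n_i\theta_i=w,$ d'o\`u $deg(z)=deg(w)=1:$ c'est le z\'ero-cycle global de degr\'e $1$ cherch\'e sur $X.$

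Le point le plus d\'elicat sera la proposition cl\'e, c'est-\`a-dire la coordination simultan\'ee des trois contraintes sur $w:$ support dans $\textsf{Hil},$ solubilit\'e locale des fibres en toutes les places, et compatibilit\'e avec la classe $w_m$ modulo $m$ et l'\'equivalence rationnelle. La difficult\'e vient de ce que le th\'eor\`eme \ref{CTresult} ne contr\^ole la classe qu'\`a $m$-torsion et \`a \'equivalence rationnelle pr\`es, alors que la solubilit\'e locale des fibres exige un contr\^ole effectif du support; c'est l'emploi conjoint des lemmes de d\'eplacement, de Lang-Weil et du lemme d'irr\'eductibilit\'e de Hilbert qui devra surmonter cette tension. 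L'application de Gysin interviendra pour comparer proprement les images des z\'ero-cycles par $\pi_*$ et $\pi^*$ et pour v\'erifier que le choix de $m$ suffit \`a pr\'eserver les conditions locales voulues.
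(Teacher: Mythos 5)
Your overall skeleton does match the paper's: the projection formula giving $\{\pi_*z_v\}\bot Br(C)$, Colliot-Th\'el\`ene's theorem \ref{CTresult} on $C$ (using the finiteness of $\sha(Jac(C))$ via the remark \ref{condition on CTresult}), a key proposition combining moving lemmas, Lang-Weil and the Hilbert irreducibility lemma, and finally a lift through the Hasse principle applied to fibers above points of $\textsf{Hil}$. But there is a genuine gap exactly at the key proposition, and it sits where the paper's proof has the one device your proposal lacks. You require a \emph{separable} zero-cycle $w=\sum_i n_i\theta_i$ of degree $1$ with \emph{all} of its support in $\textsf{Hil}$ and all fibers $X_{\theta_i}$ locally soluble. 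A separable cycle of degree $1$ cannot be effective unless it is a single rational point (which need not exist, nor be usable), so $w$ necessarily has a nonempty negative part, and you must explain how the closed points of that negative part land in $\textsf{Hil}$ with locally soluble fibers. This is not addressed, and it does not follow from the tools you invoke: the lemma \ref{simplified} only produces a single closed point rationally equivalent to a prescribed \emph{effective} cycle of degree $>2g$, and local solubility of fibers at the bad places $v\in S$ must be fed in by approximating pushforwards of effective cycles on $X_v$ --- data which the family $\{z_v\}$ (after moving by \ref{deplacement1}) provides for the positive part, but which you have no source for on the negative part.

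The paper sidesteps this entirely: it fixes at the outset a global effective cycle $z_0\in Z_0(X)$, sets $y_0=\pi_*(z_0)$, and realizes the relevant class as $\theta-(s+1)y_0$, where only the positive part $\theta$ is a single closed point produced by Hilbert irreducibility (in $\textsf{Hil}$, split everywhere: by the implicit function theorem at $v\in S$ because $\theta$ is close to the separable pushforward $\pi_*(\tau_v)$ of an effective cycle $\tau_v\sim z_v+(s+1)z_0$, and by Lang-Weil elsewhere), while the negative part $-(s+1)y_0$ is the pushforward of the global cycle $-(s+1)z_0$ and hence needs neither local solubility nor any Hasse principle: it is lifted to $X$ by construction, and the final cycle is $z=z'-(s+1)z_0$ with $\pi_*(z')=\theta$. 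Your route could likely be repaired by running the whole machinery a second time for the negative part (moving a multiple of $z_0$ globally to an effective cycle with separable pushforward and applying the Hilbert lemma to it too), but that repair is precisely the missing step, and it is strictly more work than the paper's trick. Two smaller imprecisions: ``inheritance'' of local solubility requires more than closeness of supports to $\pi_*z_v$ --- one needs the moved cycle upstairs to be effective with \emph{separable} pushforward, so that each point of $\pi_*(\tau_v)$ carries a point of $\tau_v$ with the \emph{same} residue field, deformable by implicit functions; and the Gysin map plays no role in Theorem \ref{thm4} (the paper uses it only for the approximation statements of Theorem \ref{thm5}).
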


\textbf{Approximation faible/forte pour les zéro-cycles sur $X$}

\begin{thm}\label{thm5}
Soit $\pi:X\To C$ une fibration à fibres géométriquement intègres avec $\sha(Jac(C))$ fini.
Soient $\textsf{Hil}$ un sous-ensemble hilbertien généralisé de $C$ et $\delta$ un entier.

On suppose, lorsque $k$ a une place réelle,
qu'il existe une famille de zéro-cycles locaux de degré $1$ orthogonale à $\pi^*Br(C).$

(1)On suppose que pour
tout point fermé $\theta\in \textsf{Hil},$ la fibre $X_\theta$ satisfait l'approximation faible pour les points rationnels
(resp. l'approximation faible au niveau de la cohomologie aux places finies pour les zéro-cycles de degré $1$).

Alors, l'obstruction de Brauer-Manin associée au sous-groupe $\pi^*Br(C)\subset Br(X)$
est la seule à l'approximation faible au niveau
de la cohomologie aux places finies pour les zéro-cycles de degré $\delta$.

(2)On suppose, de plus, que\ \\
(H CH0) l'application $CH_0(X_v)\To CH_0(C_v)$ est injective pour presque toute place $v\in\Omega_k.$

Alors, l'obstruction de Brauer-Manin associée au sous-groupe $\pi^*Br(C)\subset Br(X)$
est la seule à l'approximation forte au niveau
de la cohomologie aux places finies pour les zéro-cycles de degré $\delta$.
\end{thm}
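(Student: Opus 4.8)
The plan is to descend the problem to the base curve $C$, solve it there with Theorem \ref{CTresult}, and then lift the resulting global zero-cycle back to $X$ fibre by fibre using the arithmetic hypotheses on the fibres. I start from a family $\{z_v\}_{v\in\Omega}\in\prod_v CH_0(X_v)$ of degree $\delta$ orthogonal to $\pi^*Br(C)$, and I fix $m>0$ and a finite set $S\subset\Omega^{\mathrm f}$ (for part (1)). By functoriality of the Brauer--Manin pairing under proper pushforward one has $\langle z_v,\pi^*b\rangle_v=\langle \pi_*z_v,b\rangle_v$, so the pushed-forward family $\{\pi_*z_v\}$ on $C$ has degree $\delta$ and is orthogonal to $Br(C)$. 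To apply Theorem \ref{CTresult} I need a degree-$1$ zero-cycle on $C$: when $\delta=1$ this is automatic by Remark \ref{condition on CTresult}(i); when $k$ is totally imaginary it is not needed by Remark \ref{condition on CTresult}(ii); and when $k$ has a real place, the extra hypothesis of the theorem gives a degree-$1$ local family orthogonal to $\pi^*Br(C)$, whose pushforward is a degree-$1$ family orthogonal to $Br(C)$, hence again a degree-$1$ zero-cycle on $C$ by Remark \ref{condition on CTresult}(i). Thus Theorem \ref{CTresult} yields a global $y_m\in Z_0(C)$ of degree $\delta$ and local $t_v$ of degree $0$ with $y_m\sim \pi_*z_v+mt_v$ in $Z_0(C_v)$ for all $v$.

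Next I would produce a convenient representative of $y_m$. Using a moving lemma I may assume that $\pi_*z_v$ (for $v\in S$) and $y_m$ are supported in the open set $U\subset C$ over which $\pi$ is smooth with geometrically integral fibres, and that $y_m$ is separable. The heart of this step is a Hilbert-irreducibility lemma for effective zero-cycles: I want to replace $y_m$ by a rationally equivalent separable, split zero-cycle $\tilde y=\sum_i n_i\theta_i$ all of whose closed points $\theta_i$ lie in $\textsf{Hil}$, and which prescribes the local shape at the places $v\in S$. The Lang--Weil estimate is what guarantees, at every place of good reduction, an abundance of closed points with the prescribed residue behaviour that fall into the generalized Hilbert subset and whose fibres acquire smooth local points; at the finitely many remaining places the local cycles $z_v$ and $t_v$ are used to adjust $\tilde y$ by hand.

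The key proposition is the lift itself. For each $\theta_i$ I assemble, from the local cycles $z_v$ restricted to the fibre $X_{\theta_i}$ together with the good-reduction local points supplied by Lang--Weil, a family of local degree-$1$ zero-cycles on $X_{\theta_i}$ at every place of $k(\theta_i)$. The fibre hypothesis (Hasse principle, resp.\ weak approximation at the cohomology level for degree-$1$ zero-cycles) then furnishes a global degree-$1$ zero-cycle $w_i$ on $X_{\theta_i}/k(\theta_i)$, compatible with the chosen local data at the places over $S$. Setting $z=\sum_i n_i\,w_i\in Z_0(X)$ gives $\pi_*z=\tilde y\sim y_m$, and since each $w_i$ has degree $1$ over $k(\theta_i)$ the degree of $z$ is $\sum_i n_i[k(\theta_i):k]=\deg\tilde y=\delta$, so no separate degree correction is needed. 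It remains to check that $z$ and $z_v$ have the same image in $\tilde H^{2d}(X_v,\Z/m\Z(d))$ for $v\in S$, where $d=\dim X$: the compatibility of the cycle class map with the Gysin pushforward $\pi_*\colon \tilde H^{2d}(X_v,\Z/m\Z(d))\to \tilde H^{2}(C_v,\Z/m\Z(1))$ lets me combine the base-level matching $y_m\equiv \pi_*z_v\pmod m$ coming from Theorem \ref{CTresult} with the fibrewise matching produced above (cf.\ also Remark \ref{remark-definition AF}(iii) for the topological reformulation). This proves part (1).

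For part (2), take $S=\Omega^{\mathrm f}$. The hypothesis (H CH0), injectivity of $CH_0(X_v)\to CH_0(C_v)$ for almost all $v$, gives injectivity of $CH_0(X_v)/m\hookrightarrow CH_0(C_v)/m$, so at all but finitely many finite places the equality of the classes of $z$ and $z_v$ in $\tilde H^{2d}(X_v,\Z/m\Z(d))$ can be tested after $\pi_*$ on $C_v$, where it holds because $\pi_*z=\tilde y\sim y_m\equiv \pi_*z_v\pmod m$; the finitely many exceptional finite places are absorbed into a finite set and treated exactly as in part (1). Combining the two ranges of places yields strong approximation at finite places at the cohomology level for degree-$\delta$ zero-cycles. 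The main obstacle throughout is the key proposition: simultaneously (a) running Hilbert irreducibility so that $\mathrm{Supp}(\tilde y)$ lands in $\textsf{Hil}$ while still prescribing the local behaviour at $S$, and (b) certifying that each chosen fibre $X_{\theta_i}$ genuinely carries local degree-$1$ zero-cycles at \emph{every} place of $k(\theta_i)$ needed to feed its Hasse principle or weak approximation --- the Lang--Weil input at places of good reduction and the explicit bookkeeping at the bad places being precisely what make this possible.
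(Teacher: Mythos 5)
Your proposal follows the paper's skeleton (push the family down to $C$, apply Theorem \ref{CTresult}, produce a point of $\textsf{Hil}$ by an effective Hilbert irreducibility argument with prescribed local behaviour, lift through the fibre hypotheses, conclude by Gysin compatibility), but the central comparison step is genuinely broken, in two places. The correction furnished by Theorem \ref{CTresult} lives on the base and you never lift it to $X$: you only have $y_m\sim\pi_*z_v+mt_v$ in $Z_0(C_v)$ with $t_v$ a cycle on $C_v$, so every comparison between your global $z$ and the given $z_v$ is made \emph{after} applying $\pi_*$. That is fatal twice. In part (1), equality of classes in $\tilde{H}^{2d}(X_v,\Z/m\Z(d))$ cannot be tested after pushforward to $C_v$ (that map is far from injective), and your ``fibrewise matching'' is made against ``the local cycles $z_v$ restricted to the fibre $X_{\theta_i}$'', which is not defined: the support of $z_v$ has nothing to do with the fibres over the new points $\theta_i$. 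In part (2), your claim that (H CH0) ``gives injectivity of $CH_0(X_v)/m\hookrightarrow CH_0(C_v)/m$'' is false: an injection of abelian groups need not remain injective mod $m$ (multiplication by $2$ on $\Z$ is injective, but zero mod $2$). The paper's Lemma \ref{lem3} exists precisely to repair both points: using a zero-cycle of degree $n$ on the generic fibre it lifts $nt_v$ to $X_v$, replacing $z_v$ by $z'_v=z_v+mx_v$ with $\pi_*(z'_v)\sim y$ \emph{exactly} in $CH_0(C_v)$; then in (2) the integral injectivity of $CH_0(X_v)\To CH_0(C_v)$ gives $z\sim z'_v$ in $CH_0(X_v)$, and reduction mod $m$ concludes, since $z'_v-z_v=mx_v$.

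The second gap is the direction of the Gysin map. The relevant one is not the pushforward along $\pi$ but the Gysin map $\tilde{H}^{2(d-1)}({X_v}_{P_w},\Z/m\Z(d-1))\To\tilde{H}^{2d}(X_v,\Z/m\Z(d))$ attached to the closed immersion of the fibre into $X_v$, which commutes with the cycle class maps and therefore transports a matching established \emph{inside the fibres} into $\tilde{H}^{2d}(X_v,\Z/m\Z(d))$. Concretely, the paper's chain (Proposition \ref{prop1}) is: $z'_v+(s+1)z_0\sim\tau_v$ with $\tau_v$ effective and $\pi_*(\tau_v)$ separable and split; the points $R_w$ of $\tau_v$ are moved by the implicit function theorem to points $T_w$ lying in the fibres over the local components $P_w$ of $\theta$ (this proximity argument is what replaces your undefined ``restriction of $z_v$ to $X_{\theta_i}$''); the fibre hypothesis over $k(\theta)$ matches a global degree-$1$ cycle $P'$ on $X_\theta$ with the $T_w$ in the cohomology of the fibres; the Gysin map then matches $P'$ with $\tau_v$ in $X_v$, and one sets $z=P'-(s+1)z_0$. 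Note also that this forces a \emph{single} closed point $\theta$ of large degree, obtained after adding the auxiliary cycle $(s+1)z_0$ which is subtracted at the end: Hilbert irreducibility (Lemma \ref{simplified}) requires an effective separable cycle of degree $>2g$, which your $\tilde{y}\sim y_m$ of degree $\delta$ (possibly small or with negative multiplicities) need not admit; without this device your degree bookkeeping $\deg z=\deg\tilde{y}=\delta$ has nothing to apply to.
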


\begin{rem}\label{remark-CH0}
L'hypothèse (H CH0) que $CH_0(X_v)\To CH_0(C_v)$ soit injective pour presque toute place
$v\in\Omega_k$ est vérifiée pour
certaines fibrations. Par exemple, si la fibre générique $X_\eta$ est une quadrique sur $k(C)$ de dimension $d\geqslant 3,$
la fibration $X\To C$ vérifie (H CH0), voir \cite{ParimalaSuresh}, Théorème 5.2, et les références de cet article-là.
Si la fibre générique $X_\eta$ est une variété de Severi-Brauer d'indice sans facteurs carrés,
la fibration $X\To C$ vérifie également (H CH0), voir \cite{Fro98}, Théorème 4.8. Plus généralement,
dans son article \cite{Wittenberg}, Wittenberg démontre que
l'hypothèse (H CH0) est vérifiée si la fibre générique $X_\eta$ est rationnellement connexe sur $\overline{k(C)},$
\textit{cf.} le corollaire 2.2 de \cite{Wittenberg}.
\end{rem}

\begin{rem}
En concernant le théorème \ref{thm4}, la démonstration ci-dessous utilise un argument de type d'approximation
pour les zéro-cycles sur $C$ (Théorème \ref{CTresult}, Lemme \ref{lem3}). Donc, même si l'on suppose qu'il existe
un zéro-cycle global de degré $1$ sur $C,$ l'hypothèse sur la finitude de $\sha(Jac(C))$ est inévitable.
Cependant, Colliot-Thélène montre dans \cite{CTsurPoonen} ce théorème pour les solides de Poonen (où $C(k)\neq\emptyset$
\emph{cf.} les applications ci-dessous)
sans supposer la finitude de $\sha(Jac(C)).$
\end{rem}

\begin{cor}\label{corofsection2}
Avec les même notations des théorèmes \ref{thm4} et \ref{thm5}, soit $C=\mathbb{P}^1$ la droite projective.
Supposons la même hypothèse que celle du théorème \ref{thm4} (resp. du théorème \ref{thm5})
sur les fibres.
Alors le principe de Hasse (resp. l'approximation faible/forte au niveau de la cohomologie aux places finies) vaut
pour les zéro-cycles de degré $1$ (resp. de degré $\delta$) sur $X.$
\end{cor}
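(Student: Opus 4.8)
**Note: The final statement to prove is Corollaire \ref{corofsection2}, which specializes the main theorems to $C=\mathbb{P}^1$.**

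Le corollaire d\'ecoule directement des th\'eor\`emes \ref{thm4} et \ref{thm5} : le plan est de montrer que, pour $C=\mathbb{P}^1,$ l'obstruction de Brauer-Manin associ\'ee au sous-groupe $\pi^*Br(C)\subset Br(X)$ est \emph{vide}, de sorte que les conclusions \og cette obstruction est la seule \fg{} deviennent les \'enonc\'es inconditionnels souhait\'es.

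Je commencerais par rappeler que $p^*:Br(k)\To Br(\mathbb{P}^1_k)$ est un isomorphisme ($\mathbb{P}^1$ ayant un $k$-point, $p^*$ est scind\'ee injective, et le calcul du groupe de Brauer de la droite projective donne la surjectivit\'e), o\`u $p:\mathbb{P}^1_k\To Spec(k)$ est la projection structurale. Tout $\alpha\in Br(C)$ s'\'ecrit alors $\alpha=p^*\alpha_0$ avec $\alpha_0\in Br(k),$ et $\pi^*\alpha$ n'est autre que l'image de $\alpha_0$ par la fl\`eche structurale $X\To Spec(k).$ La formule de projection et le fait que $cores_{k_v(Q)/k_v}res_{k_v(Q)/k_v}=[k_v(Q):k_v]$ sur $Br(k_v)$ donnent alors, pour tout z\'ero-cycle local $z_v\in Z_0(X_v),$ l'\'egalit\'e $inv_v(\langle z_v,\pi^*\alpha\rangle_{k_v})=deg(z_v)\cdot inv_v((\alpha_0)_v).$

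L'\'etape cl\'e consiste ensuite \`a sommer sur les places : pour une famille $\{z_v\}_{v\in\Omega}$ de z\'ero-cycles locaux \emph{tous de degr\'e $\delta$}, on obtient
$$\sum_{v\in\Omega}inv_v(\langle z_v,\pi^*\alpha\rangle_{k_v})=\delta\sum_{v\in\Omega}inv_v((\alpha_0)_v)=0,$$
la derni\`ere nullit\'e \'etant la loi de r\'eciprocit\'e, i.e. l'exactitude de $0\To Br(k)\To\bigoplus_{v\in\Omega}Br(k_v)\To\Q/\Z\To 0$ d\'ej\`a utilis\'ee en \ref{condition on CTresult}(iii). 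Ainsi toute famille ad\'elique de degr\'e fix\'e est orthogonale \`a $\pi^*Br(C),$ et la condition d'orthogonalit\'e figurant dans les hypoth\`eses des th\'eor\`emes \ref{thm4} et \ref{thm5} est automatiquement v\'erifi\'ee.

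Il restera \`a conclure en invoquant les th\'eor\`emes principaux. Pour le principe de Hasse, la pr\'emisse est l'existence d'un z\'ero-cycle local de degr\'e $1$ en chaque place ; ces cycles formant une famille orthogonale \`a $\pi^*Br(C)$ par ce qui pr\'ec\`ede, le th\'eor\`eme \ref{thm4} fournit un z\'ero-cycle global de degr\'e $1.$ Pour l'approximation, toute famille de degr\'e $\delta$ \'etant orthogonale \`a $\pi^*Br(C),$ le th\'eor\`eme \ref{thm5} l'approche par une classe globale au niveau de la cohomologie aux places finies. Le seul point demandant un peu de soin est l'hypoth\`ese suppl\'ementaire de \ref{thm5} aux places r\'eelles, qui r\'eclame une famille de degr\'e $1$ orthogonale \`a $\pi^*Br(C)$ : pour $C=\mathbb{P}^1$ l'orthogonalit\'e est gratuite, et l'existence d'une telle famille r\'esulte de ce que, en presque toute place finie, une fibre de bonne r\'eduction poss\`ede un z\'ero-cycle de degr\'e $1$ (estimation de Lang-Weil sur la fibre sp\'eciale puis rel\`evement de Hensel fournissant des points ferm\'es de degr\'es premiers entre eux). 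C'est l'unique endroit, par ailleurs mineur, o\`u une v\'erification est n\'ecessaire, le c\oe ur de la d\'emonstration r\'esidant dans les th\'eor\`emes \ref{thm4} et \ref{thm5} eux-m\^emes.
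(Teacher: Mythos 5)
Your core reduction is exactly the one the paper leaves implicit (the corollary is stated there without a written proof, as an immediate consequence of Theorems \ref{thm4} and \ref{thm5}): since $p^*:Br(k)\to Br(\mathbb{P}^1_k)$ is surjective, the projection formula together with the reciprocity law shows that \emph{every} adelic family of zero-cycles of fixed degree is orthogonal to $\pi^*Br(\mathbb{P}^1)$, and $\sha(Jac(\mathbb{P}^1))=\sha(0)=0$ is trivially finite (a point you should state, since it is a hypothesis of both theorems). For the Hasse principle part, applying Theorem \ref{thm4} then finishes the proof, and this is correct.

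There is, however, a genuine gap in your treatment of the approximation part, precisely at the step you yourself flag as ``the only place needing verification''. A family of local zero-cycles of degree $1$ must have an entry at \emph{every} place of $k$; Lang--Weil plus Hensel (Lemma \ref{Lang-Weil}) produces such cycles only at almost all \emph{finite} places, and says nothing at the archimedean places --- which are exactly the places for which the hypothesis of Theorem \ref{thm5} was introduced --- nor at the finitely many remaining finite places. In fact the hypothesis can genuinely fail under the assumptions of the corollary: take $X=C'\times\mathbb{P}^1\to\mathbb{P}^1$ with $C'$ the conic $x^2+y^2+z^2=0$ over $\mathbb{Q}$; all fibres are smooth geometrically integral conics, hence satisfy the Hasse principle and weak approximation, yet $X_{\mathbb{R}}$ has index $2$, so no family of degree $1$ exists. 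The correct way to conclude is not to verify this hypothesis but to observe that it is superfluous when $C=\mathbb{P}^1$: in the proof of Theorem \ref{thm5} it is used only through Lemma \ref{lem3}, where its sole purpose is to secure the hypothesis of Theorem \ref{CTresult} that $C$ carries a zero-cycle of degree $1$; for $C=\mathbb{P}^1$ this holds trivially since $\mathbb{P}^1(k)\neq\emptyset$, which is exactly what Remark \ref{condition on CTresult}(i) records (``C'est le cas si $C=\mathbb{P}^1$''). With that substitution (and, for the \emph{strong} approximation statement, carrying along the hypothesis (H CH0) of Theorem \ref{thm5}(2)), your argument becomes a complete proof and coincides with the intended one.
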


La partie concernant le principe de Hasse pour les zéro-cycles de degré $1$ dans ce corollaire (avec $\textsf{Hil}$
l'ensemble des points fermés d'un certain ouvert non vide de $C$) est également une
conséquence du théorème 4.1 de Colliot-Thélène/Skorobogatov/ Swinnerton-Dyer  \cite{CT-Sk-SD}.


\subsection{Applications}\ \\

\textbf{Surfaces réglées et fibrations en variétés de Severi-Brauer}

Les résultats de ce paragraphe ont été montrés par Colliot-Thélène \cite{CT99}, Frossard \cite{Frossard},
et van Hamel \cite{vanHamel}.

Soit $X\To C$ une surface réglée  (\textit{i.e.} une surface fibrée en coniques: sa fibre générique est un conique)
à fibres géométriquement intègres,
en supposant la finitude du groupe $\sha(Jac(C)),$
d'après les théorèmes \ref{thm4}, \ref{thm5},
l'obstruction de Brauer-Manin
associée à $\pi^*Br(C)$ est la seule au principe de Hasse et à l'approximation forte au niveau de
la cohomologie aux places finies pour les zéro-cycles de degré $1.$

Plus généralement, soit $X\To C$ une fibration
en variétés de Severi-Brauer (resp. d'indice sans facteur carré) à fibres géométriquement intègres,
en supposant la finitude du groupe $\sha(Jac(C)),$ d'après les théorèmes \ref{thm4}, \ref{thm5}, l'obstruction de Brauer-Manin
associée à $\pi^*Br(C)$ est la seule au principe de Hasse et à l'approximation faible (resp. forte)
au niveau de la cohomologie aux places finies pour les zéro-cycles de degré $1$ sur $X.$
En fait, lorsqu'une variété de Severi-Brauer a une famille de zéro-cycles de degré $1$ locaux, elle a un point adélique
(par l'argument de restriction-corestriction du groupe de Brauer), ainsi un point global (par
la suite exacte $0\to Brk\to\bigoplus_{v\in\Omega}Brk_v\to\Q/\Z\to 0$),
elle est alors isomorphe à un espace projectif qui satisfait l'approximation faible
au niveau de la cohomologie pour les zéro-cycles de degré $1.$

On remarque que, dans Frossard \cite{Frossard}, on ne demande pas qu'une fibration en variétés de Severi-Brauer soit à fibres
géométriquement intègres, dans ce cas-là, l'obstruction de Brauer-Manin associée au groupe $Br(X)$ est nécessaire,
le sous-groupe $\pi^*Br(C)$ ne suffit pas.
Dans son article récent \cite{Wittenberg}, Wittenberg montre que l'obstruction de Brauer-Manin est la seule au principe de
Hasse (resp. à l'approximation forte au niveau du groupe de Chow) pour les zéro-cycles de degré $1$
sur \emph{toute} fibration en variétés de Severi-Brauer au-dessus une courbe (en supposant la finitude du groupe de
Tate-Shafarevich).

\smallskip
\textbf{Fibrations en surfaces de Châtelet - Solides de Poonen}

Dans un article récent \cite{Poonen}, Poonen
construit une fibration en surfaces de Châtelet $X$ au-dessus d'une courbe $C,$
telle que \ \\
(1) $X$ n'a pas de point $k$-rationnel et \ \\
(2) il n'y a pas d'obstruction de Brauer-Manin\footnote{De plus, il n'y a pas d'obstruction de Brauer-Manin étale,
voir \cite{Poonen} pour plus des détails.} au principe de Hasse pour les points rationnels.

On rappelle que
la fibration $X\To C$ est le pull-back de $V\To\mathbb{P}^1$ via un morphisme dominant $C\To\mathbb{P}^1,$
où $V$ est la compactification standard de son ouvert $V_0$ défini par l'équation
$$y^2-az^2=u^2\tilde{P}_\infty(r,w)+v^2\tilde{P}_0(r,w)$$
dans $\mathbb{P}^1\times\mathbb{P}^1\times\mathbb{A}^1\times\mathbb{A}^1,$ et où la fibration est donnée par la première
projection
${(u:v;r:w;y,z)\mapsto (u:v)}.$
Ici $a\in k^*\setminus k^{*2},$ $\tilde{P}_\infty(r,w)$ et
$\tilde{P}_0(r,w)$ sont les homogénéisations des polynômes $P_\infty(x), P_0(x)\in k[x]$  de degré $4.$
On a une factorisation
\SelectTips{eu}{12}$$\xymatrix@C=20pt @R=14pt{
V\ar[d]\ar[dr] & &(u:v;r:w;y,z)\ar@{|->}[d]\ar@{|->}[dr] &\\
\mathbb{P}^1&\mathbb{P}^1\times\mathbb{P}^1\ar[l] &(u:v)&(u:v;r:w)\ar@{|->}[l]
}$$
Une courbe lisse intègre $Z_1\subset\mathbb{P}^1\times\mathbb{P}^1$ est définie par
$$0=u^2\tilde{P}_\infty(r,w)+v^2\tilde{P}_0(r,w),$$
$Z_1\To \mathbb{P}^1$ est alors un morphisme fini plat.
On restreint à l'ouvert où $v\neq 0,w\neq 0,$ l'équation de la fibration $V\To\mathbb{P}^1$ s'écrit (avec $t=u/v,$ $x=r/w$)
$$V:y^2-az^2=P_t(x);V\To \mathbb{P}^1:(t,x,y,z)\mapsto t,$$ où $P_t(x)=t^2P_\infty(x)+P_0(x)\in k(t)[x].$
(En fait, l'argument suivant marchera bien pour un polynôme $P_t(x)$ quelconque de degré $4$ en $x,$ lorsque
$Z_1\subset\mathbb{P}^1\times\mathbb{P}^1$ défini
par $P_t(x)=0$ est une courbe intègre.)
Pour un morphisme dominant quelconque  $\psi:C\To\mathbb{P}^1,$ le produit
fibré $Z={C\times_{\mathbb{P}^1}Z_1}$ est une courbe intègre
si les lieux de branchement de $Z_1\To\mathbb{P}^1$ et de $C\To\mathbb{P}^1$
ne se rencontrent pas (\textit{cf.} Lemme 7.1 de \cite{Poonen}).
Le revêtement $Z\To C$ définit alors un sous-ensemble hilbertien généralisé $\textsf{Hil}_C$ de $C.$
Pour tout point fermé $\theta$ de $C,$ la fibre $X_\theta$ est définie par $y^2-az^2=P_\theta(x)$ avec
$P_\theta(x)=\psi(\theta)^2P_\infty(x)+P_0(x)\in k(\theta)[x].$ La condition $\theta\in \textsf{Hil}_C$ signifie que
le polynôme $P_\theta(x)$ est irréductible sur $k(\theta),$ la fibre
$X_\theta$ satisfait donc le principe de Hasse et l'approximation faible pour les points rationnels, \textit{cf.}
Théorème 8.11 de Colliot-Thélène/Sansuc/Swinnerton-Dyer \cite{chateletsurfaces}.
On sait aussi que toutes ses fibres sont géométriquement intègres.

Dans \cite{CTsurPoonen}, Colliot-Thélène
a montré qu'il existe un zéro-cycle global de degré $1$ sur une telle $X$ avec quelques hypothèses supplémentaires,
disons

(i) la fibre $V_\infty$ est lisse possédant les points rationnels localement partout,

(ii) l'ensemble $C(k)$ est non vide fini,

(iii) l'image de $C(k)$ par $C\To\mathbb{P}^1$ est contenue dans  $\{\infty\}.$\ \\
(Le point (iii) avec la finitude de $C(k)$ assure que
l'obstruction au principe de Hasse associée à $\pi^*Br(C)$ disparaît, \textit{cf.} \cite{Poonen}.)

En fait, même si l'on n'a plus ces hypothèses supplémentaires, en supposant la finitude de $\sha(Jac(C)),$
le théorème \ref{thm4} (resp. \ref{thm5})
assure que l'obstruction de Brauer-Manin associée au sous-groupe $\pi^*Br(C)\subset Br(X)$ est la seule au
principe de Hasse (resp. à l'approximation forte
au niveau de la cohomologie\footnote{Ceci devrait également valoir au niveau du groupe de Chow, si l'énoncé (E) dans
la remarque à la fin de cet article était établi.} aux places finies)
pour les zéro-cycles de degré $1$ sur $X.$ Ceci redémontre le résultat principal de Colliot-Thélène \cite{CTsurPoonen}
en supposant la finitude de $\sha(Jac(C)).$

\smallskip
\textbf{Autres exemples}

(1)On considère une fibration $X\To \mathbb{P}^1$ définie comme suit.
Sa restriction à $\mathbb{P}^1\setminus\infty$ est définie
dans $\mathbb{P}^2\times\mathbb{A}^1$ par
$$X_0:A(t)x^2+B(t)y^2+C(t)z^2=0$$
$$X_0\To \mathbb{A}^1=\mathbb{P}^1\setminus\infty;(x:y:z,t)\mapsto t$$
où $A(t),B(t),C(t)\in k[t]$ sont des polynômes sans racines communes, on suppose que
$deg(A(t))\equiv deg(B(t))\equiv deg(C(t))(mod\mbox{ }2).$
On pose $$A(t)=a_0+a_1t^1+\cdots+a_{d_1}t^{d_1},$$
$$B(t)=b_0+b_1t^1+\cdots+b_{d_2}t^{d_2},$$
$$C(t)=c_0+c_1t^1+\cdots+c_{d_3}t^{d_3}.$$ On peut supposer que $d_1\geqslant d_2\geqslant d_3.$
La fibration restreinte à $\mathbb{P}^1\setminus 0$
est définie par $$X_\infty:A_\infty(t')x'^2+B_\infty(t')y'^2+C_\infty(t')z'^2=0$$
$$X_\infty\To\mathbb{A}^1=\mathbb{P}^1\setminus 0;(x':y':z',t')\mapsto t',$$ où
$$A_\infty(t')=a_{d_1}+a_{d_1-1}t'^1+\cdots+a_0t'^{d_1},$$
$$B_\infty(t')=b_{d_2}+b_{d_2-1}t'^1+\cdots+b_0t'^{d_2},$$
$$C_\infty(t')=c_{d_3}+c_{d_3-1}t'^1+\cdots+c_0t'^{d_3}.$$
Si l'on recolle $X_0$ et $X_\infty$
via l'isomorphisme donné par $t'=1/t,$ $x'=x,$ $y'=yt^{\frac{d_2-d_1}{2}},$ et $z'=zt^{\frac{d_3-d_1}{2}},$
on définit alors un morphisme $X\To\mathbb{P}^1.$ C'est une fibration (à fibre générique géométriquement intègre)
si le polynôme
$A(t)x^2+B(t)y^2+C(t)z^2\in k(t)[x,y,z]$ est irréductible sur $\overline{k(t)},$ c'est toujours le cas si
$A(t),B(t),C(t)\in k(t)$ sont non nuls.
On peut vérifier que toutes les fibres contiennent une composante irréductible de multiplicité un qui est
géométriquement intègre
si les conditions suivantes sont vérifiées
(les théorèmes concernés sont encore valables avec l'hypothèse sur les fibres,
\textit{cf.} les lemmes 2.2 et 2.3 de Skorobogatov \cite{Sk2}),

(i) $-1\in k^{*2};$

(ii-a) $B(\alpha)/C(\alpha)\in k(\alpha)^{*2},$ pour tout $\alpha\in\bar{k}$ tel que $A(\alpha)=0;$

(ii-b) $C(\alpha)/A(\alpha)\in k(\alpha)^{*2},$ pour tout $\alpha\in\bar{k}$ tel que $B(\alpha)=0;$

(ii-c) $A(\alpha)/B(\alpha)\in k(\alpha)^{*2},$ pour tout $\alpha\in\bar{k}$ tel que $C(\alpha)=0.$\ \\
D'après le corollaire \ref{corofsection2}, le principe de Hasse/l'approximation faible au niveau de
la cohomologie aux places finies vaut pour les zéro-cycles
de degré $\delta$ sur $X.$

Soit $C\To \mathbb{P}^1$ un morphisme non constant quelconque, si l'on suppose que $\sha(Jac(C))$ est fini,
l'obstruction de Brauer-Manin associée à $\pi^*Br(C)$ est la seule au principe de Hasse/à
l'approximation faible au niveau de la cohomologie aux places finies pour les zéro-cycles
de degré $1$ sur $X\times_{\mathbb{P}^1}C$ d'après le théorème \ref{thm4}, \ref{thm5}.

Pour un exemple explicite, on pose $k=\mathbb{Q}(\sqrt{-1},\sqrt{5},\sqrt{6},\sqrt{7}),$ $A(t)=t+3,$ $B(t)=(t^2-2)(t+2),$
$C(t)=t-3,$ et on suppose que $C$ est une courbe hyper-elliptique,
dont le groupe $\sha(Jac(C))$ est fini, définie par
$u^2=P(v)$ avec $P(v)\in k[v].$
Alors, le principe de Hasse/l'approximation faible au niveau de la cohomologie aux places finies
vaut pour les zéro-cycles sur la variété projective définie par l'équation affine
$$(t+3)x^2+(t^2-2)(t+2)y^2+(t-3)=0.$$ L'obstruction de Brauer-Manin (associée à $\pi^*Br(C)$) au principe de Hasse/à
l'approximation faible au niveau de la cohomologie aux places finies
pour les zéro-cycles de degré
$1$ est la seule pour la variété projective définie sur $k$ par l'équation affine
$$(\sqrt{P(v)}+3)x^2+(P(v)-2)(\sqrt{P(v)}+2)y^2+(\sqrt{P(v)}-3)=0,$$ si $\sha(Jac(u^2=P(v)))$ est un groupe fini.

(2)Pour une fibration au-dessus de $\mathbb{P}^1$ dont la fibre générique est définie par l'équation
$$\sum_{i=1}^na_i(t)x_i^2=0$$ avec $a_i(t)\in k(t)$ de degré pair (ou impair)
tels que les $div_{\mathbb{P}^1}(a_i)$ sont différents deux à deux et $n\geqslant 4,$
de la même façon que (1), on trouve facilement un modèle projectif explicite $X\To\mathbb{P}^1,$ qui est une fibration
à fibre générique géométriquement intègre si au moins trois des $a_i(t)$ sont non nuls.
On vérifie que toutes les fibres contiennent une composante irréductible de multiplicité un qui est
géométriquement intègre.
Pour un morphisme non constant entre des courbes $C\To\mathbb{P}^1$ avec $\sha(Jac(C))$ un groupe fini, on pose
$X_C=X\times_{\mathbb{P}^1}C.$ D'après le théorème \ref{thm4}, le principe de Hasse vaut pour les zéro-cycles de degré $\delta$
sur $X,$ l'obstruction de Brauer-Manin au principe de Hasse pour les zéro-cycles de degré
$1$ sur $X_C$ est la seule.


\section{Preuves des théorèmes}

On va démontrer les théorèmes \ref{thm4} et \ref{thm5}. Un outil important consiste en les lemmes de déplacement,
qui permettent de ramener les questions sur les zéro-cycles aux questions sur les points rationnels.
On applique la méthode de Colliot-Thélène \cite{CT99}, voir aussi \cite{Frossard},
avec le théorème des fonctions implicites et le théorème de Lang-Weil, on
obtient l'existence d'un zéro-cycle global.
À l'aide de l'application de Gysin en cohomologie étale, on arrive aux énoncés autour de l'approximation pour
les zéro-cycles.


\subsection{Lemmes de déplacement}

Les lemmes de ce type sont connus depuis longtemps, on les énonce ici pour le confort du lecteur.

\begin{lem}\label{deplacement}
Soient $X$ une variété  intègre régulière  sur un corps parfait infini $k,$ et $U$ un ouvert non vide de $X.$
Alors tout  zéro-cycle $z$ de $X$ est rationnellement équivalent, sur $X,$ à un zéro-cycle $z'$ à support dans $U.$
\end{lem}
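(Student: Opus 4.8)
The plan is to reduce by $\mathbb{Z}$-linearity to the case of a single closed point $z=P$, and (discarding the trivial case $P\in U$) to move $P$ off the closed set $Y:=X\setminus U$, which has dimension $<\dim X=n$. The strategy is to replace $X$ by a well-chosen integral curve through $P$ and to perform the displacement on the normalization of that curve, where rational equivalence is governed by divisors of rational functions; one then transports the result back to $X$ by proper pushforward.

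First I would produce an integral curve $C\subseteq X$ passing through $P$, regular at $P$, and not contained in $Y$ (so that $C\cap U\neq\emptyset$). Since $X$ is regular at $P$ and $k$ is perfect, $X$ is smooth at $P$; choosing a regular system of parameters $t_1,\dots,t_n\in\mathcal{O}_{X,P}$ and cutting by $n-1$ sufficiently general combinations of them yields a subscheme that is regular of dimension one at $P$, whose component through $P$ has closure a closed integral curve $C$. A general-position (Bertini-type) argument, for which the infinitude of $k$ is essential, guarantees that $C$ meets the dense open $U$ while remaining regular at $P$. I expect this curve-construction step to be the main obstacle, since it is precisely where the hypotheses ``$k$ infinite'' and ``$X$ regular'' are consumed; everything afterwards is formal.

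Next I would pass to the normalization $\nu:\tilde{C}\to C\hookrightarrow X$ and to the smooth projective model $\hat{C}$, into which $\tilde{C}$ embeds as a dense open subcurve. Because $P$ is a regular point of $C$, the map $\nu$ is an isomorphism near the unique point $\tilde{P}$ above $P$, so $k(\tilde{P})=k(P)$ and $\nu_*\tilde{P}=P$. Let $B\subseteq\hat{C}$ be the finite set consisting of $\nu^{-1}(Y)$ together with the points at infinity $\hat{C}\setminus\tilde{C}$; it contains $\tilde{P}$, and its complement inside $\tilde{C}$ is exactly $\nu^{-1}(U)$. The semilocal ring $\mathcal{O}_{\hat{C},B}$ is a semilocal Dedekind domain, hence a principal ideal domain, so I can choose $f\in k(\hat{C})^{*}$ with $\mathrm{ord}_{\tilde{P}}(f)=1$ and $\mathrm{ord}_{Q}(f)=0$ for every $Q\in B\setminus\{\tilde{P}\}$. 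Then all zeros and poles of $f$ other than $\tilde{P}$ lie in $\nu^{-1}(U)$, so $\mathrm{div}_{\tilde{C}}(f)=\tilde{P}+R$ with $R$ supported on $\nu^{-1}(U)$, giving a rational equivalence $\tilde{P}\sim -R$ on $\tilde{C}$.

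Finally I would push forward along the morphism $\nu:\tilde{C}\to X$, which is proper as the composite of the finite normalization map $\tilde{C}\to C$ and the closed immersion $C\hookrightarrow X$. Since proper pushforward preserves rational equivalence, we obtain $P=\nu_*\tilde{P}\sim -\nu_*R$ on $X$, and $z':=-\nu_*R$ is a zero-cycle supported in $U$, as required.
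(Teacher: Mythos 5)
Your proposal is correct and is essentially the proof the paper relies on: the paper does not prove this lemma itself but refers to \cite{CT05} \S 3, where the argument is exactly yours --- produce an integral curve through $P$, regular at $P$ and not contained in $Y=X\setminus U$, pass to its normalization and smooth compactification, choose $f$ with $\mathrm{ord}_{\tilde P}(f)=1$ and $\mathrm{ord}_Q(f)=0$ at the other distinguished points via the semilocal Dedekind (weak approximation) argument, and push the resulting relation forward by the proper map $\tilde C\to X$. The Bertini-type step you flag is indeed the only place where ``$k$ infinite'' and regularity are used, and your construction can be completed along the lines you indicate: after shrinking, $(t_1,\dots,t_n)$ gives an \'etale map $t$ to $\mathbb{A}^n$ near $P$, the closure of $t(Y)$ has dimension $\leqslant n-1$, hence the lines through the origin contained in it form a closed subset of $\mathbb{P}^{n-1}$ of dimension $\leqslant n-2$, so a general $k$-rational line pulls back to a curve through $P$ not contained in $Y$.
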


\begin{proof}
On trouve une démonstration en détails dans \cite{CT05} \S 3.
\end{proof}

\begin{lem}\label{deplacement1}
Soit $\pi:X\To C$ un morphisme propre non constant au-dessus d'une courbe $C$ sur un corps parfait infini $k.$
On suppose que $X$ et $C$ sont des variétés lisses projectives géométriquement intègres sur $k.$
Soit $D$ un ensemble fini de points fermés de $C,$
on fixe $z_0$ un zéro-cycle effectif de $X.$

Alors, pour tout $z\in Z_0(X),$ il existe un entier positif $r_0$ tel que, pour tout $r>r_0,$
$z+rz_0$ soit rationnellement équivalent sur $X$ à un zéro-cycle effectif $z_r$ tel que $\pi_*(z_r)$
soit séparable à support en dehors de $D$ et de $\pi_*(z_0).$
\end{lem}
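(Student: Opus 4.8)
The plan is to reduce the fibration $\pi$ to a finite morphism of smooth curves by a Bertini argument, and then to settle the problem on a curve by Riemann--Roch together with a general position argument. First I would embed $X$ in a projective space and, as in the proof of Lemma \ref{deplacement} (\textit{cf.} \cite{CT05} \S 3), cut $X$ by $n-1$ general hypersurface sections passing through the supports of $z$ and of $z_0$, where $n=\dim X$; this produces a smooth geometrically integral curve $\Gamma\subset X$ containing both supports such that $g=\pi|_\Gamma\colon\Gamma\To C$ is finite and non-constant. (If $\dim X=1$ there is nothing to reduce; for $n\geqslant 2$ a general such curve is not contained in any fibre of $\pi$, the fibres having dimension $n-1$, so $g$ is dominant.) Since proper pushforward and rational equivalence are compatible with the closed immersion $\Gamma\hookrightarrow X$, and since the restriction of $\pi_*$ to $Z_0(\Gamma)$ is $g_*$, it suffices to prove the statement for $g\colon\Gamma\To C$, with the same effective cycle $z_0\in Z_0(\Gamma)$ and the same finite set $T=D\cup\mathrm{Supp}(\pi_*z_0)$.

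On $\Gamma$ the class of $z+rz_0$ has degree $\deg z+r\deg_\Gamma z_0$, which tends to infinity with $r$ since $z_0$ is effective of positive degree. I would take $r_0$ so that for $r>r_0$ this degree is at least $2g_\Gamma+1$, where $g_\Gamma$ is the genus of $\Gamma$. Then Riemann--Roch over the base field $k$ shows both that the class is represented by an effective $k$-rational cycle and that the complete linear system is very ample of large dimension. Working inside this system and using that $k$ is infinite, a general member is a reduced effective cycle whose support avoids any prescribed finite set; in particular I can require the support to avoid $g^{-1}(T)$ and the ramification locus of $g$, so that $g_*$ of it carries, at each image point, multiplicity equal to the residue degree of the corresponding point of $\Gamma$.

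The main obstacle is the separability of the image, that is, forcing each of these residue degrees to equal $1$: I must represent $z+rz_0$ by an effective cycle supported on \emph{horizontal} closed points $Q$, those for which $g$ induces an isomorphism $k(g(Q))\xrightarrow{\sim}k(Q)$, with pairwise distinct images outside $T$. A general member of the linear system fails this when $\deg g>1$, since by the uniform position (monodromy) principle a general fibre of $g$ is irreducible over its residue field. To circumvent this I would exhibit enough horizontal points and impose them as base conditions: totally ramified points of $g$ are horizontal, and more generally the closed points $\theta$ of $C$ whose fibre $g^{-1}(\theta)$ carries a $k(\theta)$-rational point are abundant, which is a Hilbert irreducibility / Chebotarev type input for closed points and uses that $g$ is separable (valid as $k$ is perfect). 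A Riemann--Roch dimension count then lets me fix a horizontal, separable effective representative in the prescribed class once its degree is large, precisely the regime $r>r_0$; combined with the general position argument above this yields the desired $z_r$, effective, rationally equivalent to $z+rz_0$ on $\Gamma$ and hence on $X$, with $\pi_*(z_r)=g_*(z_r)$ separable and supported outside $D$ and $\mathrm{Supp}(\pi_*z_0)$. The delicate step, on which I would spend the most care, is this production of horizontal points together with the matching of the rational equivalence class; by contrast the effectivity and reducedness are routine consequences of Riemann--Roch and of Bertini over an infinite field.
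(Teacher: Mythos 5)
Your overall scheme---cutting $X$ by hypersurface sections through $\mathrm{supp}(z)\cup\mathrm{supp}(z_0)$ to obtain a smooth curve $\Gamma\subset X$ finite over $C$, then working with the complete linear system of $z+rz_0$ on $\Gamma$ once its degree exceeds $2g_\Gamma$---is exactly the scheme of the proof the paper relies on (the paper's proof is the citation of Lemme 3.2 of \cite{CT99}, whose argument is Bertini plus Riemann--Roch plus general position). The problem is your third paragraph, which you yourself single out as the crux: the assertion that a general member of the linear system has \emph{non}-separable pushforward when $\deg g>1$ is false, and the whole ``horizontal points'' apparatus you erect to circumvent it is built on this false assertion.

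The uniform position principle concerns the fibres $g^*(\theta)$, which form a one-parameter family; a general member of $|z+rz_0|$, a system of dimension $N=\deg(z+rz_0)-g_\Gamma$, is in no way a union of fibres of $g$, so no conclusion about its residue degrees can be drawn from it. In fact the opposite of your claim holds, by the general position argument that is the actual content of Lemme 3.2 of \cite{CT99}. Since $k$ is perfect, $g_*(H\cap\Gamma)$ is separable and reduced exactly when no two geometric points of $H\cap\Gamma$ (infinitely near ones included) lie in a common geometric fibre of $g$. Pairs $(x,x')$ of distinct geometric points with $g(x)=g(x')$ vary in $(\Gamma\times_C\Gamma)\setminus\Delta$, which has dimension $\leqslant 1$; embedding $\Gamma$ in $\mathbb{P}^N$ by the very ample system $|z+rz_0|$, such a pair is separated by the embedding, so the hyperplanes containing both $x$ and $x'$ form a linear subspace of codimension $2$ of the dual projective space; hence the bad hyperplanes lie in a closed subset of dimension $\leqslant 1+(N-2)<N$, defined over $k$. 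Adjoining the finitely many hyperplanes meeting the finite set $g^{-1}(D\cup\mathrm{supp}(\pi_*z_0))$ and the tangency locus, and using that $k$ is infinite, a $k$-rational hyperplane $H$ in the complement yields $z_r=H\cap\Gamma$ with all the required properties. (Monodromy, properly used, works \emph{for} you: since $S_{d-1}$ is maximal in $S_d$, the generic residue degree over the image in $C$ is $1$ or $d$, and $d$ is absurd, as it would force every member to lie in a single fibre of $g$.) Finally, even if the difficulty had been real, your repair would not close it: prescribing finitely many horizontal points as base conditions leaves the residual effective divisor uncontrolled, and its separability is precisely the original question, so the ``Riemann--Roch dimension count'' you invoke begs the question rather than answering it; moreover, totally ramified points are finite in number, and the Chebotarev-type abundance of split closed points is a number-field statement, whereas the lemma is stated over an arbitrary infinite perfect field and is applied in the paper over the local fields $k_v$.
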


\begin{proof}
Voir \cite{CT99} Lemme 3.2, la même démonstration fonctionne pour $z_0$ remplaçant un point fermé.
\end{proof}

\subsection{Lang-Weil $+$ Hensel}
Quand on applique la méthode des fibrations, on a besoin d'estimations de Lang-Weil avec le lemme de Hensel
pour assurer que des fibres ont des points rationnels locaux pour presque toutes les places.

\begin{lem}\label{Lang-Weil}
Soit $X\To C$ une fibration à fibres géométriquement intègres,
il existe alors un sous-ensemble
fini $T$ de places de $k$ tel que pour tout point fermé $\theta\in C$ avec $X_\theta$
lisse on ait $X_\theta(k(\theta)_w)\neq\emptyset$ pour toute place
$w$ au-dessus d'une place  $v\in\Omega_k\setminus T.$
\end{lem}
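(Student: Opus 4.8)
Je commencerais par étaler la situation sur un anneau d'entiers. Je choisis un ensemble fini de places $S\supseteq\Omega^\infty$ et des modèles projectifs $\mathcal{X}\To\mathcal{C}\To Spec(O_{k,S})$ de $\pi:X\To C.$ Comme \emph{toutes} les fibres de $X\To C$ sont géométriquement intègres (y compris la fibre générique, par l'hypothèse $\star$), l'ensemble $E$ des points $c\in\mathcal{C}$ tels que $\mathcal{X}_c$ soit géométriquement intègre est constructible (EGA IV, 9.7.7) et contient toute la fibre générique $C$ de $\mathcal{C}\To Spec(O_{k,S}).$ Son complémentaire, constructible et disjoint de $C,$ se projette sur un fermé strict de $Spec(O_{k,S}),$ c'est-à-dire un ensemble fini de places; quitte à agrandir $S,$ je peux donc supposer que \emph{toute} fibre de $\mathcal{X}\To\mathcal{C}$, sur tout point de $\mathcal{C}$ (y compris ceux de caractéristique positive), est géométriquement intègre.

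Ensuite j'appliquerais l'estimation de Lang-Weil de manière uniforme dans la famille. Le plongement projectif étant fixé, les fibres de $\mathcal{X}\To\mathcal{C}$ sont des variétés géométriquement intègres de dimension et de degré bornés, de sorte que la constante de Lang-Weil est uniforme. Il existe donc un entier $q_0$ tel que, pour tout corps fini $\mathbb{F}$ de cardinal $>q_0$ et tout $\mathbb{F}$-point $c$ de $\mathcal{C},$ la fibre $\mathcal{X}_c$ possède un $\mathbb{F}$-point \emph{lisse}: on a $\#\mathcal{X}_c(\mathbb{F})\geqslant|\mathbb{F}|^n-C|\mathbb{F}|^{n-1/2},$ alors que le lieu singulier, fermé strict de degré borné, ne contient qu'$O(|\mathbb{F}|^{n-1})$ points.

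Je poserais alors $T=S\cup\set{v\in\Omega_k\ ;\ N(v)\leqslant q_0},$ qui est fini puisqu'il n'y a qu'un nombre fini de places de norme bornée. Soit $\theta\in C$ un point fermé avec $X_\theta$ lisse, et soit $w$ une place de $k(\theta)$ au-dessus de $v\in\Omega_k\setminus T.$ Par propreté de $\mathcal{C}/O_{k,S},$ le point $\theta$ fournit un $O_w$-point de $\mathcal{C},$ d'où un modèle $\mathcal{X}_\theta=\mathcal{X}\times_\mathcal{C}Spec(O_w)\To Spec(O_w)$ de $X_\theta$ dont la fibre spéciale est géométriquement intègre sur le corps résiduel $\mathbb{F}_w$ par la première étape. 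Comme $N(w)=N(v)^{f_{w/v}}\geqslant N(v)>q_0,$ cette fibre spéciale admet un $\mathbb{F}_w$-point lisse. Le morphisme $\mathcal{X}_\theta\To Spec(O_w)$ étant plat (il dérive de la fibration plate $\mathcal{X}\To\mathcal{C}$), la lissité de la fibre spéciale en ce point équivaut à la lissité du morphisme en ce point; le lemme de Hensel relève donc ce point en un $O_w$-point, d'où $X_\theta(k(\theta)_w)\neq\emptyset.$

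La principale difficulté réside dans l'étape d'étalement: il faut garantir que toutes les fibres du modèle restent géométriquement intègres sur tous les points de $\mathcal{C}$ (en particulier pour les $\theta$ dont la réduction tombe dans le lieu où $\pi$ n'est pas lisse), et ce uniformément en $\theta$ et en $w,$ afin que $q_0$ et $T$ ne dépendent pas de $\theta.$ Une fois cette uniformité acquise, Lang-Weil et Hensel sont de routine; on prendra seulement garde au fait que l'on a besoin d'un point \emph{lisse} de la fibre spéciale, et non d'un point quelconque, pour appliquer Hensel — d'où la majoration du lieu singulier ci-dessus.
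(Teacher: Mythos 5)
Your proof is correct, and it uses the same three ingredients as the paper's proof (which follows part of Lemma 1.2 of Colliot-Th\'el\`ene/Skorobogatov/Swinnerton-Dyer): spreading out to a flat projective integral model, a uniform Lang-Weil estimate, and Hensel's lemma. But you organize the crucial step differently, and your version is in fact the more complete one. The paper works over a model $\mathcal{U}\subset\mathcal{C}$ of the open set $U=C\setminus D$ of smooth fibres, enlarges $T$ so that all fibres of $\Pi|_{\mathcal{U}}$ are smooth and geometrically integral, and applies Lang-Weil to these \emph{smooth} fibres; as written, this only treats a place $w$ when the reduction $u$ of $\theta$ at $w$ lands in $\mathcal{U}$, and it says nothing when $\theta\in U$ reduces into the closure of $D$ --- a case that certainly occurs and that the later applications of the lemma (where $\theta$ and $w$ are essentially arbitrary) do require. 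Your argument is aimed exactly at that case: you exploit the hypothesis that \emph{all} fibres of $\pi$, including the singular ones over $D$, are geometrically integral, you spread this property out to every fibre of $\mathcal{X}\To\mathcal{C}$ via constructibility (EGA IV 9.7.7) and Chevalley's theorem, and you then extract a \emph{smooth} $\mathbb{F}_w$-point from Lang-Weil applied to a possibly singular geometrically integral fibre, by comparing the main term $|\mathbb{F}|^n-C|\mathbb{F}|^{n-1/2}$ with the $O(|\mathbb{F}|^{n-1})$ points of the singular locus. What each approach buys: the paper's route is shorter, since Lang-Weil on smooth fibres directly produces smooth points; yours delivers the uniformity in $\theta$ and $w$ that the statement actually asserts, and it makes visible where the hypothesis on the fibres over $D$ is used. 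Two routine facts in your write-up deserve one explicit line each: the flatness of the model $\mathcal{X}\To\mathcal{C}$ (generic flatness, after enlarging $S$), and the uniform bound on the degree of the singular loci (the non-smooth locus of $\Pi$ is a closed subscheme of $\mathcal{X}$, so its fibres over $\mathcal{C}$ have bounded degree and dimension $\leqslant n-1$).
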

\begin{proof}
La preuve suivante suit une partie de la preuve du lemme 1.2 de Colliot-Thélène/Skorobogatov/Swinnerton-Dyer \cite{CT-Sk-SD}.

Il existe un sous-ensemble fini $T$ de places de $k$ et un modèle entier $\Pi:\mathcal{X}\To\mathcal{C}$
au-dessus de $O_T$ de l'application $\pi:X\To C$ avec $\Pi$ un morphisme projectif plat.
On pose $U=C\setminus D$ et on note $\mathcal{U}$ un modèle de $U$ au-dessus de $O_T,$
où $D$ est le sous-ensemble des points fermés correspondant à une fibre non lisse.
En augmentant $T,$ on peut demander également que toutes les fibres de la restriction $\Pi|_\mathcal{U}$
soient géométriquement intègres lisses.
Le polynôme de Hilbert
est constant pour une famille projective plate d'espace de paramètre connexe.
Toutes les fibres de $\Pi|_\mathcal{U}$ sont alors géométriquement intègres projectives lisses de dimension $d$ fixée
et de degré $n$ (pour la famille entière)  dans un espace projectif de dimension $r$ fixée. La fibre en un point
fermé $u$ de $\mathcal{U}$ est projective lisse géométriquement intègre sur un corps fini $k(u).$ D'après
les estimations de Lang-Weil \cite{Lang-Weil}, si la cardinalité de $k(u)$   est plus grande qu'une constante $c,$ qui ne
dépend que de $d,$ $n$ et $r,$
une telle fibre a un  $k(u)$-point (lisse). Le lemme de Hensel assure que $X_\theta(k(\theta)_w)\neq\emptyset$ ($\theta\in U$)
où $u$ est dans l'adhérence de $\theta$ dans $\mathcal{C}$ définissant une place non archimédienne de $k(\theta).$
En augmentant $T,$ on peut supposer que pour toute $w\in \Omega_{k(\theta)}$ en dehors de $T$ on ait
$|k(\theta)(w)|>c.$
\end{proof}

\subsection{Lemme d'irréductibilité de Hilbert}
Avec le lemme suivant, qui est en un certain sens une version effective du théorème d'irréductibilité de Hilbert,
on peut approximer certains zéro-cycles effectifs locaux par un point fermé global
appartenant à un sous-ensemble hilbertien généralisé $\textsf{Hil}.$
C'est la version pour les zéro-cycles du théorème 1.3 de Ekedahl \cite{Ekedahl}.

\begin{lem}\label{simplified}
Soit $C$ une courbe projective lisse géométriquement intègre de genre $g$
sur un corps de nombres $k.$
Soit $\textsf{Hil}$ un sous-ensemble hilbertien généralisé de $C.$
Soit $y\in Z_0(C)$ un zéro-cycle effectif de degré $d>2g.$
On fixe $S$ un sous-ensemble fini de $\Omega_k.$
On suppose que $z_v\in Z_0(C_v)$ est un zéro-cycle effectif séparable de degré $d$ à support disjoint de $supp(y)$
rationnellement équivalent à $y\times_kk_v$ sur $C_v$ pour tout $v\in S.$

Alors il existe un point fermé $\theta$ de $C$ de degré $d,$ tel que

(1) $\theta\in\textsf{Hil},$

(2) $\theta$ soit rationnellement équivalent à $y,$

(3) $\theta$ soit suffisamment proche de $z_v$ pour tout $v\in S.$
\end{lem}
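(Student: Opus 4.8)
The plan is to realize the closed point $\theta$ as a $k$-rational point of a well-chosen projective space, obtained via the linear system associated to $y$, and then apply a Hilbert-irreducibility argument with local conditions. First I would use the Riemann-Roch theorem: since $\deg(y)=d>2g$, the divisor $y$ is very ample, so the complete linear system $|y|$ defines a closed immersion $C\hookrightarrow\mathbb{P}^N$ with $N=d-g$, and the effective divisors rationally (indeed linearly) equivalent to $y$ are exactly the hyperplane sections, parametrized by the rational points of the dual projective space $\check{\mathbb{P}}^N$. A closed point $\theta$ of degree $d$ on $C$ that is linearly equivalent to $y$ corresponds precisely to a $k$-rational point $H\in\check{\mathbb{P}}^N(k)$ whose hyperplane section $H\cap C$ is a single closed point of degree $d$ (equivalently, $H\cap C$ is reduced, irreducible, and not split). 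This translates condition (2) into ``$\theta$ lives in the fixed linear system $|y|$'' and reduces the problem to finding a suitable $k$-point of $\check{\mathbb{P}}^N$.

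Next I would encode the three requirements as conditions on $H\in\check{\mathbb{P}}^N(k)$. For condition (3), each separable effective local cycle $z_v$ (which is linearly equivalent to $y\times_k k_v$, hence also a hyperplane section $H_v\cap C$ over $k_v$) gives an open neighborhood in $\check{\mathbb{P}}^N(k_v)$, and I want $H$ to lie in it for all $v\in S$; by the implicit function theorem the map sending a hyperplane to its intersection points is a local homeomorphism near a transverse section, so approximating $z_v$ amounts to approximating $H_v$ $v$-adically. For condition (1), intersecting $\textsf{Hil}$ with the linear system $|y|$ cuts out a generalized Hilbert subset of $\check{\mathbb{P}}^N$ in the sense of Definition \ref{hilbertien}: pulling back the finite \'etale cover $Z\to U\subset C$ along the incidence correspondence produces an \'etale cover over an open subset of $\check{\mathbb{P}}^N$, whose connectedness of fibers is the defining condition. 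The key point is that requiring $\theta=H\cap C$ to be a single closed point of degree $d$ (irreducibility of the fiber) is itself a Hilbert-type condition, so after intersecting with the cover coming from $\textsf{Hil}$ I obtain one generalized Hilbert subset $\mathcal{H}\subset\check{\mathbb{P}}^N$ whose $k$-rational points simultaneously satisfy (1) and (2).

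Finally I would invoke Ekedahl's effective Hilbert irreducibility theorem (\cite{Ekedahl}, Theorem 1.3, cited just before the statement): it guarantees that a classical Hilbert subset of $\check{\mathbb{P}}^N(k)$ contains a $k$-point arbitrarily close $v$-adically, for all $v$ in a fixed finite set $S$, to any prescribed local points. Applying it to $\mathcal{H}\cap\check{\mathbb{P}}^N(k)$ with the prescribed local hyperplanes $\{H_v\}_{v\in S}$ produces the desired $H\in\check{\mathbb{P}}^N(k)$, and the corresponding $\theta=H\cap C$ is a closed point of degree $d$ satisfying all three conclusions. The main obstacle I anticipate is the bookkeeping at the interface between the two Hilbert conditions: I must check that the cover encoding ``$\theta$ is irreducible of degree $d$'' and the pullback of the cover defining $\textsf{Hil}$ can be combined into a single integral \'etale cover over a common nonempty open subset of $\check{\mathbb{P}}^N$ (using the composition-of-Hilbert-subsets construction discussed after Definition \ref{hilbertien}), and that the locus where the section is transverse and disjoint from $\mathrm{supp}(y)$ remains nonempty and open so that Ekedahl's theorem applies with the separability and approximation requirements intact.
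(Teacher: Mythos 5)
Your proposal is correct, and it reaches the conclusion by a genuinely different (though closely related) route. The paper never introduces the incidence correspondence over the dual projective space: it writes $z_v-y=\mathrm{div}_{C_v}(f_v)$, uses Riemann--Roch and ordinary weak approximation on the projective space $\mathbb{P}(\Gamma(C,\mathcal{O}_C(y)))$ to produce a single global function $f$ close to every $f_v$, so that $y'=\mathrm{div}_0(f)$ is a global effective divisor close to each $z_v$ (hence separable, by a Krasner-type argument); this $f$ defines a degree-$d$ morphism $\psi\colon C\to\mathbb{P}^1$ with $\psi^*(\infty)=y$ and $\psi^*(0)=y'$, the cover defining $\textsf{Hil}$ is composed with $\psi$ to get a generalized Hilbert subset $\textsf{Hil}'\subset\mathbb{P}^1$, and Ekedahl's theorem is invoked only for $\mathbb{P}^1$, giving $\theta'\in\textsf{Hil}'$ close to $0$ and finally $\theta=\psi^{-1}(\theta')$. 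You instead apply Ekedahl once, over a base of dimension $N=d-g$, to the Hilbert subset of $\check{\mathbb{P}}^N\cong|y|$ cut out by pulling $Z\to U$ back through the universal family of hyperplane sections; the two constructions are close relatives, since the paper's $\mathbb{P}^1$ is exactly the line in your $\check{\mathbb{P}}^N$ through $[y]$ and $[y']$. The trade-off is real: your route avoids the auxiliary divisor $y'$ and the preliminary approximation step (and your Hilbert subset $\mathcal{H}$ is independent of the data $\{z_v\}$; only the $v$-adic neighborhoods depend on them), but it needs Ekedahl in arbitrary dimension and the bookkeeping you flag. That bookkeeping does go through: $Z\times_C I$ is a projective bundle over the integral variety $Z$, hence integral, and connectedness of its fibre over $H$ already forces its image $H\cap C$ to be connected, so this single pulled-back cover encodes both of your Hilbert conditions at once (note also that your condition (2) is automatic for any hyperplane section, since rational and linear equivalence coincide on a curve). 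The paper's route, by contrast, needs only the trivial weak approximation on projective space plus Ekedahl for $\mathbb{P}^1$, at the price of the intermediate choice of $f$ and the Krasner argument.
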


\begin{proof}
Pour chaque $v\in S,$ on écrit $z_v-y=div_{C_v}(f_v)$ avec une certaine fonction $f_v\in k_v^*(C_v)/k_v^*.$
Comme $deg(y)=d>2g,$
d'après le théorème de Riemann-Roch, $\Gamma(C,\mathcal{O}_C(y))$ est un espace vectoriel de dimension
$r=d+1-g>g+1.$
l'approximation faible appliquée à l'espace projectif $\mathbb{P}^{r-1}$ nous donne une fonction $f\in k(C)^*/k^*$ tel que

(i) $f$ soit suffisamment proche de $f_v$ pour tout $v\in S,$

(ii) $div_C(f)=y'-y$ avec $y'$ un zéro-cycle effectif à support hors de $supp(y).$

D'après une version convenable du lemme de Krasner, (i) implique que, pour tout $v\in S,$
le zéro-cycle $y'$ est suffisamment proche de $z_v$ et donc séparable.

La fonction $f$ définit alors
un $k$-morphisme $\psi:C\To \mathbb{P}^1$ tel que $\psi^*(\infty)=y$ et $\psi^*(0)=y'.$
Supposons que le sous-ensemble hilbertien généralisé $\textsf{Hil}$ est défini par un morphisme fini étale
$Z\To U\subset C$ avec $Z$ une variété intègre.
La composition $\varphi:Z\To C\To\mathbb{P}^1$ définit un sous-ensemble hilbertien généralisé
$\textsf{Hil}'$ de $\mathbb{P}^1$ de la façon suivante:
En enlevant un ensemble fini $\psi(C\setminus U)$ de points fermés et les points ramifiés, on trouve un ouvert non vide
$U'$ de $\mathbb{P}^1$ tel que $\psi^{-1}(U')\subset U$ et tel que $\varphi$ soit étale sur $Z'=\varphi^{-1}(U'),$
le morphisme $\varphi_{|Z'}:Z'\to U'\subset\mathbb{P}^1$ définit alors $\textsf{Hil}'.$
Une fois qu'un point fermé $\theta'$ de $\mathbb{P}^1$ appartient à $\textsf{Hil}',$
l'image réciproque $\theta=\psi^{-1}(\theta')$ est un point fermé de $C$ appartenant à $\textsf{Hil}.$
D'après le théorème d'irréductibilité de Hilbert (de version effective par Ekedahl \cite{Ekedahl}, Théorème 1.3), il existe un
point $k$-rationnel $\theta'$ de $\mathbb{P}^1$ appartenant à $\textsf{Hil}'$ et suffisamment proche de
$0\in\mathbb{P}^1(k_v)$ pour chaque $v\in S.$ Comme $y'=\psi^*(0)$ est un zéro-cycle séparable, le morphisme $\psi$ est alors
étale en chaque point fermé apparaissant dans $y',$ d'où $\theta=\psi^*(\theta')=\psi^{-1}(\theta')\in \textsf{Hil}$
est un zéro-cycle suffisamment proche de $y'\times_kk_v$ pour tout $v\in S.$ Vus comme zéro-cycles sur $C,$ on a $\theta\sim y.$
\end{proof}

\subsection{Proposition clé}
La proposition suivante joue un rôle crucial.

\begin{prop}\label{prop1}
Soient $\pi:X\To C$ une fibration à fibres géométriquement intègres et $D$ un sous-ensemble fini
de points fermés de $C.$ Soit $\textsf{Hil}$ un sous-ensemble hilbertien généralisé de $C.$
Soit $y\in Z_0(C)$ un zéro-cycle (pas nécessairement effectif) à support en dehors de $D.$ Supposons qu'il existe une famille
$\{z_v\}$ de zéro-cycles de $X_v$ telle que $z_v\in Z_0(X_v/y)$ pour toute $v\in \Omega.$ Alors,

(0)Pour tout sous-ensemble fini $S$ de $\Omega,$ il existe un entier positif $s_0$ tel que, pour tout entier
$s>s_0,$
il existe les données suivantes:

(i)un zéro-cycle effectif $z_0\in Z_0(X)$ tel que $y_0=\pi_*(z_0)\in Z_0(C)$ est à support hors de $D;$

(ii)pour chaque $v\in S,$ un zéro-cycle effectif $\tau_v\in Z_0(X_v)$ rationnellement équivalent à $z_v+(s+1)z_0$ sur $X_v$
tel que $\pi_*(\tau_v)$ soit séparable à support
hors de $D;$

(iii)un point fermé  $\theta \in\textsf{Hil}$ de degré $d$ tel que
$\theta$ soit déployé localement partout, et tel que comme zéro-cycle $\theta$ soit suffisamment proche de $\pi_*(\tau_v)$
pour toute $v\in S,$ et $\theta-(s+1)y_0$ soit
rationnellement équivalent à $\pi_*(z_v)$ sur $C_v$ pour toute $v\in\Omega.$

(1)De plus, on suppose que pour tout point fermé $\theta\in \textsf{Hil},$
la fibre $X_\theta$ satisfait le principe de Hasse
pour l'existence de points rationnels (ou l'existence de zéro-cycles de degré $1$). Alors il existe un zéro-cycle
$z\in Z_0(X)$ tel que $\pi_*(z)$ soit suffisamment proche de $\pi_*(\tau_v)-(s+1)y_0\in Z_0(C_v)$ pour toute $v\in S$
et tel que $\pi_*(z)\sim\pi_*(z_v)$ pour toute $v\in\Omega.$

(2)De plus, on suppose que pour tout point fermé $\theta\in \textsf{Hil},$
la fibre $X_\theta$ satisfait l'approximation faible
pour les points rationnels. Alors on peut choisir  $z\in Z_0(X)$ tel que
$z$ soit suffisamment proche de $\tau_v-(s+1)z_0$ et tel que $\tau_v-(s+1)z_0\sim z_v\in Z_0(X_v),$ ainsi
$\pi_*(z)$ soit suffisamment proche de $\pi_*(\tau_v)-(s+1)y_0$
et $\pi_*(\tau_v)-(s+1)y_0\sim \pi_*(z_v)\in Z_0(C_v)$ pour toute $v\in S.$
\end{prop}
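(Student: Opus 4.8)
The strategy is to reduce everything to rational points on a single good fibre, obtained via the Hilbert irreducibility lemma \ref{simplified}, after first making the various cycles effective and separable by the displacement lemmas. I would proceed as follows.

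Pour la partie (0), l'idée directrice est de fabriquer un point fermé $\theta\in\textsf{Hil}$ dont la fibre $X_\theta$ porte la bonne information arithmétique locale. On fixe d'abord, grâce au lemme \ref{deplacement}, un zéro-cycle effectif $z_0\in Z_0(X)$ dont l'image $y_0=\pi_*(z_0)$ évite $D$ et est suffisamment générique (par exemple à support séparable). Ensuite, pour chaque $v\in S$, on applique le lemme de déplacement \ref{deplacement1} au zéro-cycle local $z_v+(s+1)z_0$ sur $X_v$ : pour $s$ assez grand (ce qui fournit le $s_0$ de l'énoncé), on obtient un zéro-cycle effectif $\tau_v$ rationnellement équivalent à $z_v+(s+1)z_0$ dont la projection $\pi_*(\tau_v)$ est effective, séparable et à support hors de $D$, ce qui établit (i) et (ii). Le point crucial est alors d'appliquer le lemme d'irréductibilité de Hilbert \ref{simplified} au zéro-cycle effectif $y'=\pi_*(\tau_v)-(s+1)y_0+\deg(\cdots)$ : en quitte à augmenter $s$ pour garantir la condition de degré $d>2g$, on produit un point fermé $\theta\in\textsf{Hil}$, de degré $d$, rationnellement équivalent à la cible voulue et suffisamment proche de $\pi_*(\tau_v)$ pour tout $v\in S$. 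La condition $\theta\in\textsf{Hil}$ combinée au lemme de Lang-Weil \ref{Lang-Weil} assure, aux places en dehors d'un ensemble fini, que $X_\theta(k(\theta)_w)\neq\emptyset$ ; aux places restantes (celles de $S$ et les quelques places exceptionnelles), le fait que $\theta$ soit proche de $\pi_*(\tau_v)$ transporte l'existence de points locaux sur $X_{\pi_*(\tau_v)}$ vers $X_\theta$, ce qui donne le caractère \emph{déployé localement partout} de (iii).

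Pour les parties (1) et (2), on exploite que $\theta\in\textsf{Hil}$ : la fibre $X_\theta$ vérifie par hypothèse le principe de Hasse (resp. l'approximation faible) pour les points rationnels (ou les zéro-cycles de degré $1$). Comme on vient de montrer que $\theta$ est déployé localement partout, c'est-à-dire que $X_\theta$ possède des points locaux en toute place de $k(\theta)$, le principe de Hasse fournit un point rationnel sur $X_\theta$, autrement dit un zéro-cycle $z\in Z_0(X)$ relevant $\theta$, donc avec $\pi_*(z)=\theta\sim\pi_*(z_v)$ (via (iii)) pour toute $v\in\Omega$, ce qui prouve (1). Pour (2), on remplace le principe de Hasse par l'approximation faible sur $X_\theta$ afin d'imposer de surcroît que le point rationnel relevant $\theta$ soit $v$-adiquement proche des points de $\tau_v$ aux places de $S$ ; la proximité se propage alors par $\pi_*$ et l'on vérifie les équivalences rationnelles $\tau_v-(s+1)z_0\sim z_v$ en utilisant la construction de $\tau_v$ en (ii).

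L'obstacle principal sera le point (iii), et plus précisément le contrôle simultané de trois exigences sur le \emph{même} point fermé $\theta$ : appartenance à $\textsf{Hil}$, équivalence rationnelle globale prescrite, et proximité locale aux places de $S$. C'est exactement le rôle du lemme \ref{simplified}, mais il faut soigneusement gérer l'interaction entre la condition de degré $d>2g$ (qui force à augmenter $s$) et la compatibilité des équivalences rationnelles locales $\theta-(s+1)y_0\sim\pi_*(z_v)$ en \emph{toute} place, y compris celles hors de $S$, où l'on ne dispose que de l'équivalence rationnelle de départ $z_v\in Z_0(X_v/y)$ et non d'une proximité explicite. La vérification que le caractère localement déployé de $\theta$ tient aux places archimédiennes et aux places exceptionnelles de Lang-Weil demandera aussi une attention particulière.
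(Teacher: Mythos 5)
Your overall strategy is the same as the paper's (displacement lemmas, then the Hilbert irreducibility lemma \ref{simplified} applied to the pushed-forward cycles, then Lang--Weil plus the arithmetic hypothesis on the fibre $X_\theta$), but there is a genuine gap at exactly the step you yourself single out as crucial. Lemma \ref{simplified} takes as input a \emph{global effective} zero-cycle of degree $>2g$ together with local effective separable cycles rationally equivalent to it on each $C_v$, $v\in S$, and you never produce this global cycle: your candidate ``$y'=\pi_*(\tau_v)-(s+1)y_0+\deg(\cdots)$'' is a local cycle (it depends on $v$) and is incoherent as written, while the natural global candidate $y+(s+1)y_0$ is in general \emph{not effective}, since $y$ is not assumed effective. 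The paper fills precisely this hole: before applying \ref{simplified}, it invokes the moving lemma on the curve (lemme 3.1 de \cite{CT99}, a Riemann--Roch argument) to produce a global effective \emph{s\'eparable} cycle $y_1\sim y+y_0$ with support outside $D$, and then applies Lemma \ref{simplified} with global cycle $y_1+sy_0$ and local cycles $\pi_*(\tau_v)\sim z_v+(s+1)z_0$. This step is also what resolves the obstacle you flag at the end of your proposal: Lemma \ref{simplified} then yields a \emph{global} rational equivalence $\theta\sim y_1+sy_0$ on $C$, which combined with the global equivalence $y_1\sim y+y_0$ and the hypothesis $\pi_*(z_v)\sim y$ on $C_v$ (valid at \emph{every} place) gives $\theta-(s+1)y_0\sim\pi_*(z_v)$ for all $v\in\Omega$, including $v\notin S$. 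Without the global effective representative $y_1$, assertion (iii) at the places outside $S$ does not follow from anything you have written.

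A second gap concerns the claim that $\theta$ is d\'eploy\'e localement partout. At the Lang--Weil exceptional places (the finite set $T$ of Lemma \ref{Lang-Weil}) which do not lie in $S$, you have neither the Lang--Weil estimate (these places are excluded from it) nor the proximity of $\theta$ to $\pi_*(\tau_v)$ (available only for $v\in S$), so your ``transport'' argument fails there. The remedy, as in the paper, is to enlarge $S$ once and for all at the beginning so that $S\supseteq T$ (harmless, both sets being finite), \emph{before} constructing the $\tau_v$ and applying Lemma \ref{simplified}; then places outside $S$ are handled by Lang--Weil$+$Hensel, and places in $S$ by the implicit function theorem, using that $\pi_*(\tau_v)$ is effective and separable, hence d\'eploy\'e. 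Finally, a smaller slip in your part (1): you conclude $\pi_*(z)=\theta\sim\pi_*(z_v)$, which cannot hold for degree reasons ($\deg\theta=d$ is large, while $\deg\pi_*(z_v)=\deg y$); one must set $z=z'-(s+1)z_0$, where $z'$ lifts $\theta$, so that $\pi_*(z)=\theta-(s+1)y_0\sim\pi_*(z_v)$, exactly as assertion (iii) dictates.
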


\begin{proof}
D'après les estimations de Lang-Weil et le lemme de Hensel (\textit{cf.} Lemme \ref{Lang-Weil}),
il existe un sous-ensemble fini $T\subset \Omega$ tel que pour tout point fermé
$x\in C\setminus D$ la fibre $X_x$ ait un $k(x)_w$-point rationnel pour toute place $w$ au-dessus d'une place
$v\in\Omega_k\setminus T.$

On peut supposer que $S\supseteq T.$

On prend un zéro-cycle effectif $z_0$ qui est un multiple d'un point fermé de $X$ tel que $deg(z_0)$ soit suffisamment grand, et
tel que $y_0=\pi_*(z_0)\in Z_0(C)$ soit à support hors de $D$. Le lemme 3.1 de \cite{CT99} implique qu'il
existe  un zéro-cycle effectif séparable $y_1\sim y+y_0\in Z_0(C)$ à support en dehors de $D.$

On pose $t_v=z_v+z_0\in Z_0(X_v/y_1).$ D'après le lemme de déplacement \ref{deplacement1}, on trouve
alors un entier $s_0$ suffisamment grand (qui dépend de $S$)
tel que pour tout $s>s_0$ et toute $v\in S$ il existe un zéro-cycle effectif
$\tau_v\sim t_v+sz_0=z_v+(s+1)z_0\in Z_0(X_v)$ tel que $\pi_*(\tau_v)$ soit séparable, déployé, et à
support en dehors de $D\cup supp(y_0)\cup supp(y_1).$

On a alors, pour toute $v\in S,$
$\pi_*(\tau_v)\sim y_1+sy_0\in Z_0(C_v),$ $deg(\pi_*(\tau_v))$ suffisamment grand.
On applique le lemme \ref{simplified} et on trouve $\theta\in \textsf{Hil}.$
Pour $v\in S$
l'image de ${\theta}$ dans $Z_0(C_v)$ est déployée par le théorème des fonctions implicites, pour $v\notin S$ c'est aussi le cas
par les estimations de Lang-Weil.
Donc ${\theta}$ est déployé localement partout.

On sait que ${\theta}\sim y_1+sy_0,$ $y_1\sim y+y_0$ et $z_v\in Z_0(X_v/y)$ pour toute $v\in\Omega,$
donc ${\theta}-(s+1)y_0\sim\pi_*(z_v)$
pour toute $v\in\Omega.$
On sait aussi que pour toute $v\in S,$ ${\theta}\sim y_1+sy_0\sim \pi_*(\tau_v)\in Z_0(C_v).$
Ceci complète la preuve de l'assertion (0).

(1)De plus, si pour tout point fermé appartenant à $\textsf{Hil},$  la fibre en ce point satisfait le principe de Hasse,
alors il existe un zéro-cycle global de degré $1$ sur la $k(\theta)$-variété $X_\theta,$
\textit{i.e.} il existe $z'\in Z_0(X)$ tel que $\pi_*(z')={\theta},$
on pose $z=z'-(s+1)z_0$ alors $\pi_*(z)={\theta}-(s+1)y_0.$

(2)De plus, si pour tout point fermé appartenant à $\textsf{Hil},$
la fibre en ce point satisfait l'approximation faible pour les points rationnels,
alors on peut choisir $z'\in Z_0(X)$ suffisamment proche de $\tau_v$ pour $v\in S$ par le
théorème des fonctions implicites, alors $z=z'-(s+1)z_0$ est suffisamment proche de $\tau_v-(s+1)z_0$
et $\tau_v-(s+1)z_0\sim z_v\in Z_0(X_v)$ pour toute $v\in S.$
\end{proof}

\begin{rem}
Dans l'assertion (2), il semble qu'on ne peut pas avoir $z\sim \tau_v-(s+1)z_0\sim z_v\in Z_0(X_v)$
généralement. Dans des cas particuliers d'une fibration en coniques au-dessus d'une
courbe ou une fibration en variétés de Severi-Brauer au-dessus d'une courbe, on peut assurer
dans les assertions respectivement à (2) que le zéro-cycle global obtenu est
localement rationnellement équivalent aux zéro-cycles locaux donnés.
\end{rem}

\subsection{Preuves des théorèmes}

Afin de montrer les théorèmes, on a besoin d'un lemme.

\begin{lem}\label{lem3}
Soit $X\To C$ une fibration avec $\sha(Jac(C))$ fini.
Soit $\{z_v\}\in \prod_{v\in\Omega}Z_0(X_v)$ une famille de zéro-cycles
de degré $\delta$ à support en dehors de $D$ telle que $\{\pi_*(z_v)\}\bot Br(C).$
On suppose, lorsque $k$ a une place réelle,
qu'il existe une famille de zéro-cycles locaux de degré $1$ orthogonale à $\pi^*Br(C).$

Alors pour tout entier strictement positif $m,$ il existe $\{x_v\}\in\prod_{v\in\Omega}Z_0(X_v)$ de degré $0$ et il existe
$y\in Z_0(C)$ de degré $\delta$ à support en dehors de $D$ tel que pour toute $v\in\Omega$ on ait
$z'_v=z_v+mx_v\in Z_0(X_v/y).$
\end{lem}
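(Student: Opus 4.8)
The plan is to push everything down to the base curve $C$, apply Colliot-Th\'el\`ene's approximation result (Th\'eor\`eme \ref{CTresult}) there, and then lift the resulting local correction cycles back up to $X$ fibre by fibre. First I would record that $\{\pi_*(z_v)\}$ is a family of zero-cycles of degree $\delta$ on $C$ orthogonal to $Br(C)$ — this is exactly the hypothesis $\{\pi_*(z_v)\}\bot Br(C)$. To be allowed to invoke Th\'eor\`eme \ref{CTresult} I must supply its missing input, a global zero-cycle of degree $1$ on $C$, or else dispose of it. If $k$ is totalement imaginaire there is nothing to do: Remarque \ref{condition on CTresult}(ii) gives the conclusion of \ref{CTresult} unconditionally. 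If $k$ has a real place, I use the standing hypothesis that there is a family of local zero-cycles of degree $1$ on $X$ orthogonal to $\pi^*Br(C)$; pushing it forward (formule de projection $\langle\pi_*(w_v),b\rangle_v=\langle w_v,\pi^*b\rangle_v$) yields a family of local zero-cycles of degree $1$ on $C$ orthogonal to $Br(C)$, whence, by Remarque \ref{condition on CTresult}(i) (where the finiteness of $\sha(Jac(C))$ is used), a global zero-cycle of degree $1$ on $C$.

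With this in hand, Th\'eor\`eme \ref{CTresult}, applied to $\{\pi_*(z_v)\}$ and to the given $m$, produces a global zero-cycle $z_m\in Z_0(C)$ of degree $\delta$ together with, for each $v$, a zero-cycle $t_v\in Z_0(C_v)$ of degree $0$, such that $z_m\sim\pi_*(z_v)+mt_v$ on $C_v$ for every $v\in\Omega$. Using the moving lemma on the curve (Lemme \ref{deplacement}) I move $z_m$ inside its rational equivalence class to a cycle $y\in Z_0(C)$ of degree $\delta$ whose support avoids $D$; then $y\sim z_m$ on each $C_v$ as well. It now suffices, for each $v$, to produce $x_v\in Z_0(X_v)$ of degree $0$ with $m\pi_*(x_v)\sim mt_v$ on $C_v$, for then $\pi_*(z_v+mx_v)=\pi_*(z_v)+m\pi_*(x_v)\sim\pi_*(z_v)+mt_v\sim z_m\sim y$, which is precisely $z_v+mx_v\in Z_0(X_v/y)$.

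The remaining, and main, point is this lifting of the $t_v$ to $X_v$. By Lang-Weil and Hensel (Lemme \ref{Lang-Weil}) there is a finite set $T\subset\Omega$ such that for $v\notin T$ every smooth fibre over a closed point of $C\setminus D$ acquires a rational point over the relevant completion. For such $v$ I first use the moving lemma (Lemme \ref{deplacement}) on $C_v$ to represent $t_v$ by a cycle supported on $C\setminus D$, and then lift each closed point of the support to a rational point of its (smooth, geometrically integral) fibre; this gives $x_v\in Z_0(X_v)$ with $\pi_*(x_v)\sim t_v$, and automatically $\deg(x_v)=\deg(t_v)=0$, so even the stronger relation $\pi_*(x_v)\sim t_v$ holds.

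The hard part is the finitely many places $v\in T$ (the archimedean ones in particular): a geometrically integral fibre over the local field need not carry a zero-cycle of degree $1$, so $t_v$ itself may fail to lift and $\pi_*\colon CH_0(X_v)^0\to CH_0(C_v)^0$ need not be surjective. Here I do not need to lift $t_v$, only to achieve the weaker $m\pi_*(x_v)\sim mt_v$, and I expect to obtain it by exploiting the zero-cycle $z_v$ already present on $X_v$ — which realises $\pi_*(z_v)$ as a genuine pushforward — together with the mod-$m$ slack coming from the identity $mt_v=z_m-\pi_*(z_v)$; at the real places the degree-$1$ hypothesis on $X_v$ furnishes the odd-degree cycle on a fibre that is needed. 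Controlling this lift uniformly over the finitely many $v\in T$ is where the genuine work lies; everything else is a formal consequence of Th\'eor\`eme \ref{CTresult} and the moving lemmas.
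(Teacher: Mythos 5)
Your setup is sound and coincides with the paper's up to a point: you push the family down to $C$, you secure the missing hypothesis of Th\'eor\`eme \ref{CTresult} correctly (Remarque \ref{condition on CTresult}(ii) in the totally imaginary case; pushforward of the degree-$1$ family by the projection formula plus Remarque \ref{condition on CTresult}(i) when $k$ has a real place), and you correctly reduce the lemma to producing, for each $v$, a degree-$0$ cycle $x_v\in Z_0(X_v)$ with $m\pi_*(x_v)\sim mt_v$. But the step you yourself flag as ``where the genuine work lies'' --- the finitely many places $v\in T$ --- is a genuine gap, and it is not a technicality: it is the entire content of the lemma. The ingredients you propose do not close it. The ``mod-$m$ slack'' $mt_v\sim z_m-\pi_*(z_v)$ merely restates the problem (to use it you would have to lift the global cycle $z_m$ into $X_v$, which is the same difficulty over the same bad fibres), and a single degree-$1$ zero-cycle on $X_v$ at a real place gives no control on which classes of $CH_0(C_v)$ lie in the image of $\pi_*$.

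The paper's proof removes the difficulty before it can appear, by inflating the modulus instead of fighting at the bad places. Let $n$ be the degree of some zero-cycle on the generic fibre $X_\eta$ over $k(C)$ (such a cycle always exists). Apply Th\'eor\`eme \ref{CTresult} not with $m$ but with $mn$ (the paper actually uses $man$ for an auxiliary positive integer $a$): one gets $y\in Z_0(C)$ of degree $\delta$ and degree-$0$ cycles $t_v$ with $y\sim\pi_*(z_v)+mn\,t_v$ on $C_v$ for \emph{every} $v\in\Omega$. Spreading out the generic zero-cycle yields a nonempty open $U\subset C$ such that for every closed point $Q$ of $U_v$, at \emph{any} place $v$, the cycle $nQ$ is exactly the pushforward of a zero-cycle of $X_v$ supported in the fibre over $Q$. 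After moving $t_v$ into $U_v$ by Lemme \ref{deplacement}, one obtains $x'_v\in Z_0(X_v)$ with $\pi_*(x'_v)=nt_v$ on the nose, and $x_v=ax'_v$ (of degree $0$) finishes: $\pi_*(z_v+mx_v)=\pi_*(z_v)+man\,t_v\sim y$. Note that this repair cannot be grafted onto your argument a posteriori: once \ref{CTresult} has been applied with modulus $m$, no multiple-of-$n$ manipulation recovers $m\pi_*(x_v)\sim mt_v$; the factor $n$ must be absorbed into the modulus \emph{before} invoking \ref{CTresult}. In particular, Lang--Weil (Lemme \ref{Lang-Weil}) is not needed in this lemma at all; in the paper it intervenes only later, in the proof of the Proposition \ref{prop1}, and your case distinction between $T$ and its complement disappears.
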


\begin{proof}
Il existe toujours un zéro-cycle de degré positif sur la $k(C)$-variété $X_\eta,$ on note son degré $n$.
On fixe un nombre entier positif $a.$
Comme $\{\pi_*(z_v)\}\bot Br(C),$ d'après le théorème de Colliot-Thélène \ref{CTresult},
on trouve $y\in Z_0(C)$ de degré $\delta$ et $t_v\in Z_0(C_v)$ de degré $0$ tels que
$y\sim \pi_*(z_v)+mant_v\in Z_0(C_v).$

On peut supposer que $y$ et $t_v$ ont leurs supports en dehors de $D$ d'après le lemme de déplacement \ref{deplacement}.

Il existe un ouvert non vide $U$ de $C$ tel que pour tout $Q\in U,$
il existe un zéro-cycle de degré $n$ sur $X_Q/k(Q).$
On peut supposer aussi que les $t_v$ ont leurs supports dans $U$ par le lemme \ref{deplacement},
il existe alors, pour toute $v\in\Omega,$
$x'_v\in Z_0(X_v)$ tel que $\pi_*(x'_v)=nt_v.$ On pose $x_v=ax'_v$ et $z'_v=z_v+mx_v,$ alors
$\pi_*(z'_v)=\pi_*(z_v)+mant_v\sim y\in Z_0(C_v),$ \textit{i.e.} $z'_v\in Z_0(X_v/y).$
\end{proof}

\begin{proof}[du théorème \ref{thm4}]
On note $D$ l'ensemble fini des points fermés $Q$ de $C$ tel que la fibre $X_Q$
ne soit pas lisse.
Soit $z_v\in Z_0(X_v)$ de degré $\delta$ tels que $\{z_v\}\bot \pi^*Br(C),$ on peut supposer que
les supports de $\pi_*(z_v)$ sont disjoints de $D$ d'après
le lemme de déplacement \ref{deplacement}.

Pour $m$ un entier strictement positif fixé, le lemme \ref{lem3}  donne $x_v\in Z_0(X_v)$
de degré $0$ et $y\in Z_0(C)$ de degré $\delta$
à support en dehors de $D$ tels que pour toute $v\in \Omega$ on ait $z'_v=z_v+mx_v\in Z_0(X_v/y).$

La proposition \ref{prop1} (1), appliquée à $\{z'_v\}$ avec
$S=\{v\}$ une place non archimédienne, dit qu'il existe $z\in Z_0(X)$ tel que
$\pi_*(z)$ soit suffisamment proche de $\pi_*(\tau_v)-(s+1)y_0$ et
tel que $\pi_*(\tau_v)-(s+1)y_0 \sim y\in Z_0(C_{v}).$ Alors $deg(z)=deg(\pi_*(z))=deg(\pi_*(\tau_v)-(s+1)y_0)=deg(y)=\delta.$
\end{proof}

\begin{proof}[du théorème \ref{thm5}]\ \\
\indent(1)Grâce à la dualité locale entre la cohomologie $H^{2d}(X_v,\mathbb{Z}/m\mathbb{Z}(d))$ et la cohomologie
$H^2(X_v,\mathbb{Z}/m\mathbb{Z}(1))$ pour $v\in\Omega^\textmd{f}$
(\textit{cf.} \cite{Saito0} 2.9) et le diagramme commutatif pour $v\in\Omega^\textmd{f}$
\SelectTips{eu}{12}$$\xymatrix@C=20pt @R=14pt{
CH_0(X_v)\ar[d]^{\phi_{m,v}}\ar[r]&Hom({_mBr(X_v)},\mathbb{Q}/\mathbb{Z})\ar@{^(->}[d]\\
H^{2d}(X_v,\mathbb{Z}/m\mathbb{Z}(d))\ar[r]^{\simeq}&H^2(X_v,\mathbb{Z}/m\mathbb{Z}(1))^*,
}$$ où $d=dim(X),$
il suffit de montrer l'assertion suivante:
\smallskip

Étant donnée une famille  $\{z_v\}$ de zéro-cycles de degré
$\delta$ telle que $\{z_v\}\bot \pi^*Br(C),$ pour tout entier $m$ et
$S\subseteq\Omega^\textmd{f}$ un ensemble fini de places
non archimédiennes, il existe $z=z_{m,S}\in Z_0(X)$ de degré $\delta$
tel que $ \langle z,b \rangle _{X_v}= \langle z_v,b \rangle _{X_v}$ pour tout $b\in {_mBr(X_v)}$ et toute
$v\in S.$\bigskip

On note $D$ l'ensemble fini des points fermés $Q$ de $C$ tel que la fibre $X_Q$
ne soit pas lisse.

Soit $z_v\in Z_0(X_v)$ tel que $\{z_v\}\bot\pi^*Br(C),$ on a alors $\{\pi_*(z_v)\}\bot Br(C).$
Par le lemme de déplacement \ref{deplacement}, on peut supposer que les $\pi_*(z_v)$ sont à supports en dehors
de $D$ pour toute $v\in \Omega.$

Le lemme \ref{lem3}  donne $x_v,z'_v\in Z_0(X_v)$ de degrés $0$
et $\delta$ respectivement et $y\in Z_0(C)$ de degré $\delta$ à support en
dehors de $D,$ tels que $z'_v=z_v+mx_v\in Z_0(X/y).$
En particulier, pour tout $b\in
{_mBr(X_v)}$ et toute $v\in\Omega^\textmd{f}$ on a
$ \langle z'_v,b \rangle _{X_v}= \langle z_v,b \rangle _{X_v}.$

- Si $X_\theta$ satisfait l'approximation faible pour les points rationnels
pour tout $\theta\in \textsf{Hil},$ la proposition \ref{prop1}(2)  donne $z\in Z_0(X)$ tel que, pour $v\in S,$
$z$ soit suffisamment proche de $\tau_v-(s+1)z_0$ et $\tau_v-(s+1)z_0\sim z'_v\in Z_0(X_v),$
d'où $\pi_*(z)$ est suffisamment proche de $\pi_*(\tau_v)-(s+1)y_0$ et
$\pi_*(\tau_v)-(s+1)y_0 \sim \pi_*(z'_v)\sim y\in Z_0(C_v),$
en particulier,
$deg(z)=deg(z'_v)=\delta$ et $ \langle z,b \rangle _{X_v}= \langle z'_v,b \rangle _{X_v}= \langle z_v,b \rangle _{X_v}$
pour tout $b\in {_mBr(X_v)}$
et toute $v\in\Omega^\textmd{f}.$

- On suppose que $X_\theta$ satisfait l'approximation faible au niveau de la cohomologie aux places finies
pour les zéro-cycles de degré $1$
pour tout $\theta\in \textsf{Hil}.$ On peut supposer que $\textsf{Hil}$ et $D$ sont disjoints.

Pour un point fermé $P$ de $C$ tel que $X_P$ soit lisse, $X_P$ est alors de dimension $d-1,$
on pose $K=k(P).$
Le zéro-cycle $P_v$ défini comme l'image de $P\in Z_0(C)\buildrel{i_v^*}\over\To Z_0(C_v)$ est séparable,
où $i_v:C_v\To C.$ On sait que $i_v^{-1}(P)=Spec(K)\times_kk_v=\bigsqcup_{w|v,w\in\Omega_K} Spec(K_w),$
$P_v=i_v^*(P)=\sum_{w|v,w\in\Omega_K}P_w$ où $P_w=Spec(K_w)$ est un point fermé de $C_v$ de corps résiduel
$K_w.$ On a aussi $X_P\times_kk_v=\bigsqcup_{w|v,w\in\Omega_K}X_{P,w},$ où $X_{P,w}=X_P\times_KK_w\simeq {X_v}_{P_w}$ est
la fibre de $X_v\To C_v$ au point fermé $P_w.$
On pose $CH_0(X_P\times_kk_v)=\prod_{w|v,w\in\Omega_K}CH_0({X_v}_{P_w}).$ On trouve
que $\tilde{H}^{2(d-1)}(X_P\times_kk_v,\frac{\mathbb{Z}}{m\mathbb{Z}}(d-1))\simeq
\prod_{w|v,w\in\Omega_K}\tilde{H}^{2(d-1)}({X_v}_{P_w},\frac{\mathbb{Z}}{m\mathbb{Z}}(d-1)).$
On considère le diagramme commutatif suivant
\small
\SelectTips{eu}{10}$$\xymatrix@C=-32pt @R=30pt{
CH_0(X_P) \ar[dd]^{\phi_{P,m}}\ar[rd]^{\alpha_P}\ar[rr]^{\psi_P}&&**[r]\prod_{v\in\Omega^\textmd{f}}CH_0(X_P\times_kk_v)\ar'[d][dd]^{\phi_{P,m,v}}\ar[rd]^{\alpha_{P,v}}
\\&CH_0(X)\ar[dd]^(.7){\phi_m}\ar[rr]^(.3)\psi&&\prod_{v\in\Omega^\textmd{f}}CH_0(X_v)\ar[dd]^(.7){\phi_{m,v}}
\\H^{2(d-1)}(X_P,\frac{\mathbb{Z}}{m\mathbb{Z}}(d-1))\ar'[r][rr]^{\psi^H_{mP}}\ar[rd]^{\beta_P}&&**[r]\prod'_{v\in\Omega^\textmd{f}}\tilde{H}^{2(d-1)}(X_P\times_kk_v,\frac{\mathbb{Z}}{m\mathbb{Z}}(d-1))\ar[rd]^{\beta_{P,v}}
\\&H^{2d}(X,\frac{\mathbb{Z}}{m\mathbb{Z}}(d))\ar[rr]^{\psi^H_m}&&\prod'_{v\in\Omega^\textmd{f}}\tilde{H}^{2d}(X_v,\frac{\mathbb{Z}}{m\mathbb{Z}}(d))
}$$

\normalsize

Dans ce diagramme, les $\phi$ sont des applications cycle, par exemple
$\phi_{P,m,v}=\prod_{w|v,w\in\Omega_K}\phi_{P_w}$ où
$$\phi_{P_w}:CH_0({X_v}_{P_w})\To\tilde{H}^{2(d-1)}({X_v}_{P_w},\frac{\mathbb{Z}}{m\mathbb{Z}}(d-1)).$$
L'application $\alpha_{P,v}$ est définie par $\alpha_{P,v}=\prod_{w|v,w\in\Omega_K}\alpha_{P,w},$ où
$$\alpha_{P,w}:CH_0({X_v}_{P_w})\To CH_0(X_v).$$
L'application $\beta_{P,v}$ est définie par $\beta_{P,v}=\prod_{w|v,w\in\Omega_K}\beta_{P,w},$ où
$$\beta_{P,w}:\tilde{H}^{2(d-1)}({X_v}_{P_w},\frac{\mathbb{Z}}{m\mathbb{Z}}(d-1))\To \tilde{H}^{2d}(X_v,\frac{\mathbb{Z}}{m\mathbb{Z}}(d))$$
est l'application de Gysin pour le $k_v$-couple lisse $({X_v}_{P_w},X_v)$,
qui est bien définie au moins pour une place non archimédienne
(\textit{cf.} \cite{Thomason} et \cite{SGA4} XVI \S 3). L'application $\beta_P$ est l'application de Gysin pour le $k$-couple lisse
$(X_P,X)$ de codimension $1.$ Les $\psi$ sont des applications de type local-global.

Comme les applications cycle commutent avec les applications de Gysin (\textit{cf.} \cite{MilneEC} VI.9.3),
le diagramme ci-dessus est commutatif.

La proposition \ref{prop1}  donne un point fermé $\theta\in \textsf{Hil},$ vu comme zéro-cycle
effectif séparable ${\theta}\in Z_0(C)$
qui est suffisamment proche de $\pi_*(\tau_v)$ pour toute $v\in S\subset\Omega^\textmd{f}.$
On pose $P=\theta,$ c'est un point fermé de corps résiduel $K=k(\theta).$
On pose $P_v=i^*_v(P)=\sum_{w|v,w\in\Omega_K}P_w\in Z_0(C_v)$ où le corps résiduel $k(P_w)=K_w.$
Chaque $P_w$ est suffisamment proche d'un point fermé (unique) $Q_w$ (de corps résiduel $K_w$) de $C_v$ qui apparaît dans
$\pi_*(\tau_v).$ Comme $\pi_*(\tau_v)$ est séparable,
il y a exactement un point fermé $R_w$ (de corps résiduel $K_w$) de $X_v,$ qui apparaît dans
$\tau_v$ contenu dans la fibre ${X_v}_{Q_w}.$
Le théorème des fonctions implicites  donne un point fermé $T_w$ (de corps résiduel $K_w$) de $X_v$ contenu dans la fibre
${X_v}_{P_w},$ suffisamment proche de $R_w,$ tel que $ \langle T_w,b \rangle = \langle R_w,b \rangle $
pour tout $b\in{_mBr(X_v)},$ autrement dit,
$T_w$ et $R_w$ ont la même image dans $\tilde{H}^{2d}(X_v,\frac{\mathbb{Z}}{m\mathbb{Z}}(d))$, \textit{i.e.}
$\phi_{m,v}(\alpha_{P,w}([T_w]))=\phi_{m,v}([R_w])$ où $[-]$ dénote la classe d'un zéro-cycle dans le groupe de Chow.
Par hypothèse, il existe un zéro-cycle $P'\in Z_0(X_P)$ de degré $1$
tel que, pour toute $w|v,$ $v\in S\subset\Omega^\textmd{f},$ et tout
$b\in{_mBr(X_{P,w})}$ (on rappelle que $X_{P,w}={X_v}_{P_w}$), on ait $ \langle P',b \rangle = \langle T_w,b \rangle .$ Autrement dit,
$P'$ et $T_w$ ont même image dans $\tilde{H}^{2(d-1)}({X_v}_{P_w},\frac{\mathbb{Z}}{m\mathbb{Z}}(d-1)),$
\textit{i.e.} $\phi_{P_w}(\psi_P([P']))=\phi_{P_w}([T_w]).$ Grâce au diagramme commutatif ci-dessus,
$\alpha_P([P'])\in CH_0(X)$ et $R_w$ ont donc même
image dans $\tilde{H}^{2d}(X_v,\frac{\mathbb{Z}}{m\mathbb{Z}}(d))$ pour toute $v\in S.$
Avec les notations de la proposition \ref{prop1}, on pose $z'=P'$
et $z=z'-(s+1)z_0\in Z_0(X),$ avec un calcul simple, on trouve finalement
que $z$ et $\{z_v\}$ ont la même image dans $\prod_{v\in S}\tilde{H}^{2d}(X_v,\frac{\mathbb{Z}}{m\mathbb{Z}}(d)),$
ceci complète la preuve de (1).

(2)De plus, avec l'hypothèse (H CH0), on peut supposer que pour toute $v\notin S$ l'application $CH_0(X_v)\To CH_0(C_v)$
est injective. D'après l'argument ci-dessus, il reste à vérifier que pour $v\in \Omega^\textmd{f}\setminus S$ on a:
$z$ et $z_v$ ont même image dans $\tilde{H}^{2d}(X_v,\mathbb{Z}/m\mathbb{Z}(d)),$
pour le zéro-cycle global $z\in Z_0(X)$ obtenu dans (1).
Avec l'injectivité de l'application $CH_0(X_v)\To CH_0(C_v),$ il suffit de vérifier que
$z$ et $z_v$ ont même image dans $CH_0(C_v).$
Or, avec les notations dans (1) et dans la proposition \ref{prop1}, on sait alors que $z=z'-(s+1)z_0$ et
$\pi_*(z)=\pi_*(z')-(s+1)y_0={\theta}-(s+1)y_0\sim\pi_*(z_v)$ pour toute $v\in\Omega.$
\end{proof}

\begin{rem}
Dans le théorème \ref{thm5}, en comparant avec le cas où $X$ est une courbe,
on ne peut rien dire autour des places réelles, les difficultés sont les suivantes.

(1)Est-ce que la surjection $H^2(X_\mathbb{R},\mathbb{Z}/m\mathbb{Z}(1))\twoheadrightarrow{_mBr(X_\mathbb{R})}$
se factorise à travers $H^2(X_\mathbb{R},\mathbb{Z}/m\mathbb{Z}(1))\To \tilde{H}^2(X_\mathbb{R},\mathbb{Z}/m\mathbb{Z}(1))$?
(La réponse à la même question sur $\mathbb{C}$ est négative.)

(2)Existe-t-il une application de Gysin
$$\tilde{H}^{2(d-1)}(X_{P,\mathbb{R}},\mathbb{Z}/m\mathbb{Z}(d-1))\To \tilde{H}^{2d}(X_{\mathbb{R}},\mathbb{Z}/m\mathbb{Z}(d))$$
pour la paire lisse $(X_{P,\mathbb{R}},X_\mathbb{R}),$
commutant avec les applications cycle?
\end{rem}

\begin{rem} Dans le théorème \ref{thm5},
on part d'une hypothèse arithmétique sur les fibres au niveau de
la cohomologie, et on obtient une conclusion sur l'espace total au niveau de la cohomologie. Sans modifier les détails des preuves,
une fois qu'on a établi l'énoncé (E) suivant, on peut partir d'une hypothèse d'approximation faible au niveau du groupe de Chow
et obtenir une conclusion d'approximation faible/forte au niveau du groupe de Chow sur l'espace total.
L'énoncé (E) est montré par Wittenberg dans \cite{Wittenberg}, lemme 1.8.\ \\
(E)``proche $\Rightarrow$ équivalence rationnelle modulo $m$": Soit $X$ une variété projective lisse sur
un corps local de caractéristique $0,$
et soit $m$ un entier strictement positif. Alors deux zéro-cycles effectifs $z,z'\in Z_0(X)$ sont rationnellement
équivalents modulo $m$ (\textit{i.e.} ils ont même image dans $CH_0(X)/m$),
lorsque $z$ est suffisamment proche de $z'.$
\end{rem}


\bibliographystyle{plain}
\bibliography{mybib1}
\end{document}